\setlist[enumerate]{label=$(\mathrm{\arabic*})$, leftmargin=*}
\setlist[itemize]{leftmargin=*}
\newtheorem{thm}{Theorem}[section]
\newaliascnt{theo}{thm}
\newtheorem{theo}[theo]{Theorem}
\newaliascnt{cor}{thm}
\newtheorem{cor}[cor]{Corollary}
\newaliascnt{prop}{thm}
\newtheorem{prop}[prop]{Proposition}
\newaliascnt{lem}{thm}
\newtheorem{lem}[lem]{Lemma}
\newaliascnt{conj}{thm}
\newtheorem{conj}[conj]{Conjecture}
\newaliascnt{que}{thm}
\newtheorem{que}[que]{Question}
\newaliascnt{ass}{thm}
\newtheorem{ass}[ass]{Assumption}
\newaliascnt{defnot}{thm}
\theoremstyle{remark}
\newaliascnt{rem}{thm}
\newtheorem{rem}[rem]{Remark}
\theoremstyle{definition}
\newtheorem{defn}[thm]{Definition}
\newtheorem{exmp}[thm]{Example}
\newtheorem{notn}[thm]{Notation}
\newcommand{\Z}{\mathbb{Z}\xspace}
\newcommand{\F}{\mathbb{F}\xspace}
\newcommand{\Q}{\mathbb{Q}\xspace}
\newcommand{\G}{\mathbb{G}\xspace}
\DeclareMathOperator{\Spec}{Spec}
\DeclareMathOperator{\res}{res}
\DeclareMathOperator{\tor}{tor}
\DeclareMathOperator{\dv}{div}
\DeclareMathOperator{\alb}{alb}
\DeclareMathOperator{\img}{Im}
\DeclareMathOperator{\Cor}{Cor}
\DeclareMathOperator{\Hom}{Hom}
\DeclareMathOperator{\Pic}{Pic}
\DeclareMathOperator{\Gal}{Gal}
\DeclareMathOperator{\Br}{Br}
\DeclareMathOperator{\inv}{inv}
\DeclareMathOperator{\nd}{nd}
\DeclareMathOperator{\CH}{CH}
\DeclareMathOperator{\rk}{rank}
\DeclareMathOperator{\End}{End}
\DeclareMathOperator{\NS}{NS}
\DeclareMathOperator{\pr}{pr}
\let\c@equation\c@thm
\numberwithin{equation}{section}
\newcommand{\blue}{\color{blue}}
\newcommand{\black}{\color{black}}
\newcommand{\computationsUpperBound}{5000}
\newcommand{\missingCurves}{176}
\title{Weak Approximation for $0$-cycles on a product of elliptic curves}
\author[*]{Evangelia Gazaki* \\
\\with an appendix by Angelos Koutsianas**} \address[*]{\normalfont Department of Mathematics, University of Virginia, 221 Kerchof Hall, 141 Cabell Dr., Charlottesville, VA, 22904, USA. Email: \texttt{eg4va@virginia.edu}}
\address[**]{\normalfont Department of Mathematics, Aristotle University of Thessaloniki, 54124, Thessaloniki, Greece. Email: \texttt{akoutsianas@math.auth.gr}}
\begin{document}

\maketitle

\begin{abstract} In the 1980's Colliot-Th\'{e}l\`{e}ne, Sansuc, Kato and S. Saito proposed conjectures related to local-to-global principles for $0$-cycles on arbitrary smooth projective varieties over a number field. We give some evidence for these conjectures for a product $X=E_1\times E_2$ of two elliptic curves. In the special case when $X=E\times E$ is the self-product of an elliptic curve $E$ over $\Q$ with potential complex multiplication, we show that the places of good ordinary reduction are often involved in a Brauer-Manin obstruction for $0$-cycles over a finite base change. We give many examples when these $0$-cycles can be lifted to global ones. 
\end{abstract}

\section{Introduction} 
Let $X$ be a smooth projective geometrically connected variety over a number field $F$. We denote by $\Omega$ the set of all places $v$ of $F$ and by $F_v$ the completion of $F$ at a place $v$. The classical local-to-global principles for $X$ refer to the image of the diagonal embedding \[X(F)\hookrightarrow X(\mathbf{A}_F):=\prod_{v\in\Omega}X(F_v)\] to the set $X(\mathbf{A}_F)$ of adelic points of $X$. The cohomological Brauer group $\Br(X):=H^2(X_{\text{\'{e}t}},\G_m)$ is known to often obstruct either the existence of an $F$-rational point or the density of $X(F)$ in $X(\mathbf{A}_F)$. Namely, by the foundational work of Y. Manin (\cite{Manin1971}), the Brauer group gives rise to an intermediate closed subset $X(F)\subseteq X(\mathbf{A}_F)^{\Br(X)}\subseteq X(\mathbf{A}_F)$, which is often empty or properly contained in $X(\mathbf{A}_F)$.
   Unfortunately, the Brauer-Manin obstruction cannot always explain the failure of the Hasse principle and weak approximation for points (cf. \cite{Skorobogatov1999, Poonen2010}). However, this happens to be the case for abelian varieties, assuming finiteness of their Tate-Shafarevich groups, and it is conjectured to be the case for geometrically rationally connected varieties (\cite[p.~174]{Colliot-Thelene2003}). The answer is likely to be yes also for $K3$-surfaces (cf. \cite[p.~4]{Skorobogatov/Zharin2008}). 
   
  
  In this article we are interested in an analog of this study for $0$-cycles, namely for the group $\CH_0(X)$.
   Denote by $\Omega_f(F)$ (resp. $\Omega_\infty(F)$) the set of all finite (resp. infinite) places of $F$. For a place $v\in\Omega$ denote by $X_v$ the base change to $F_v$.  We have the following conjecture.
  \begin{conj}[{\cite[Section 4]{Colliot-Thelene/Sansuc1981}, \cite[Section 7]{Kato/Saito1986}, see also \cite[Conjecture~1.5 (c)]{Colliot-Thelene1993}}]
\label{locatoglobalconj} 
Let $X$ be a smooth projective geometrically connected variety over a  number field $F$. 
 The following complex is exact, 
\[\hspace{10pt}\varprojlim_{n}  \CH_0(X)/n\rightarrow
\varprojlim_{n}\CH_{0,\mathbf{A}}(X)/n\rightarrow\Hom(\Br(X),\Q/\Z).\] 
\end{conj}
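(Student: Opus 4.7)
A complete proof of Conjecture \ref{locatoglobalconj} in the stated generality is a major open problem, so what follows is a strategy outline that underlies the partial results in the literature and presumably motivates the case $X=E_1\times E_2$ studied here. The plan is to split the Chow group via the Albanese map $\alb\colon X\to A:=\Alb(X)$, writing $T(X)=\ker(\alb_*\colon\CH_0(X)_{\deg 0}\to A(F))$, and to work place-by-place on the short exact sequence
\[
0\to T(X)\to \CH_0(X)_{\deg 0}\to A(F)\to 0.
\]
After applying $\varprojlim_n(-)/n$ and combining with the analogous local sequences, the Albanese quotient is governed by Cassels--Tate duality for $A$, assuming finiteness of the Tate--Shafarevich group: the Brauer--Manin obstruction on this piece matches the one coming from the algebraic Brauer group $\Br_1(X)$ via the Picard--Albanese correspondence, yielding exactness on a large subquotient of the middle term.

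The nontrivial step is exactness on the Albanese kernel. For a product of curves, one identifies $T(X)$ with a Somekawa $K$-group $K(F;E_1,E_2)$, and pairs it against the transcendental Brauer group $\Br(X)/\Br_1(X)$ using Galois symbols in the Mackey functor formalism. At each finite place $v$ the local pairing reduces to a product of norm residue symbols that can be computed after reduction mod $v$, and at primes of good ordinary reduction the reduction map carries geometric information fine enough to detect nontrivial obstructions; this is precisely the mechanism the paper highlights in the CM case. The aim is to show that if $(\zeta_v)\in\varprojlim_n\CH_{0,\mathbf{A}}(X)/n$ is orthogonal to all of $\Br(X)$, then its transcendental components can be lifted to a global class.

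The main obstacle is precisely this last lifting step. There is no known unconditional Hasse principle for the Somekawa $K$-group, and even the expected finiteness of its torsion---which would collapse the inverse limit to something tractable---is open in general. Consequently one expects Conjecture \ref{locatoglobalconj} to be provable unconditionally only in restricted settings: when the transcendental Brauer group can be computed and shown to be finite, when isogenies force a collapse of $T(X)$ modulo torsion, or when explicit computations at finitely many controlling primes close the gap. The role of the appendix, giving explicit reductions and curves, is presumably to supply the input for such a case-by-case verification rather than for a uniform argument.
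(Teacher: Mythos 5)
You correctly recognized that the statement is a \emph{conjecture} (due to Colliot-Th\'el\`ene--Sansuc and Kato--Saito), not a theorem of the paper, so there is no proof to reproduce; the paper only supplies evidence for it in the special case $X=E_1\times E_2$. Given that, your outline is a fair description of the strategy the paper actually follows. The reduction to the Albanese kernel paired against the transcendental Brauer group is precisely \autoref{compatibility} and \autoref{reduction} (which also use the assumption of trivial Galois action on $\NS(X_{\overline F})$ to kill $H^1(F,\NS)$, a point your sketch leaves implicit but which the paper needs to make the reduction rigorous). The identification of $F^2(X)$ with the Somekawa group $K(F;E_1,E_2)$ and the use of Galois symbol maps and norm residue computations at places of good ordinary reduction is exactly the content of Sections 3 and 4, and you correctly single out the global lifting step as the genuine obstacle, which the paper handles only by ad hoc constructions of ``good points'' in \autoref{mainmain2} and \autoref{quadraticcase} rather than a uniform argument.

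One small correction: the finiteness of the Tate--Shafarevich group is invoked to control the Albanese quotient (and to make \autoref{reduction} via Colliot-Th\'el\`ene's result for curves), but the paper explicitly remarks that once one restricts attention to proving exactness of complex \eqref{complex2intro} on the $F^2$-piece, the Tate--Shafarevich hypothesis is no longer needed; the constructed $0$-cycles are genuine global classes. Your outline suggests Cassels--Tate duality is load-bearing throughout, which slightly overstates its role in the paper's actual results.
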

 The above formulation is due to van Hamel (\cite{vanHamel2003}), see also \cite[Conjecture~($E_0$)]{Wittenberg2012}.  The adelic Chow group $\CH_{0,\mathbf{A}}(X)$ is equal to $\displaystyle\prod_{v\in\Omega_f(F)}\CH_0(X_v)$ when $F$ is totally imaginary, and it has a small contribution from the infinite real places otherwise.
 
When $X$ is a smooth projective curve, \autoref{locatoglobalconj} has been proved by Colliot-Th\'{e}l\`{e}ne (\cite[paragraph 3]{CT1997}) assuming the finiteness of the Tate-Shafarevich group of its Jacobian. 
In higher dimensions, the main evidence for \autoref{locatoglobalconj} is for rationally connected varieties starting with the work of Colliot-Th\'{e}l\`{e}ne, Sansuc and Swinnerton-Dyer (\cite{CT/Sansuc/SwinnertonDyerI, CT/Sansuc/SwinnertonDyerII}) and continued by the work of Liang (\cite[Theorem B]{Liang2013}). The latter proved this conjecture assuming that the Brauer-Manin obstruction is the only obstruction to Weak Approximation for points on $X_L$ where $L$ is any finite extension of $F$. There is some recent partial evidence of Ieronymou of similar flavor for $K3$-surfaces (cf. \cite[Theorem 1.2]{Ieronymou2021}). Moreover, Harpaz and Wittenberg (\cite[Theorem 1.3]{Wittenberg/Harpaz2016})  proved that the conjecture is compatible with fibrations over curves. 

The purpose of this article is to give some evidence for this conjecture for a product $X=E_1\times E_2$ of elliptic curves. We focus on the following weaker question. 
\begin{que} Let $p$ be a prime number and $\Br(X)\{p\}$ be the $p$-primary torsion subgroup of $\Br(X)$. Is the following complex exact
\begin{equation}\label{complexintro}\varprojlim_{n}  \CH_0(X)/p^n\rightarrow
\varprojlim_{n}\CH_{0,\mathbf{A}}(X)/p^n\rightarrow\Hom(\Br(X)\{p\},\Q/\Z)?\end{equation} 
\end{que} Assuming the finiteness of the Tate-Shafarevich group of $X$, and that the action of the absolute Galois group $G_F$ on the N\'{e}ron-Severi group $\NS(X\otimes_F\overline{F})$ is trivial, the exactness of \ref{complexintro} can be reduced (cf. \autoref{compatibility}, \autoref{reduction}) to the exactness of the complex,
\begin{equation}\label{complex2intro}\varprojlim_{n}  F^2(X)/p^n\rightarrow
\varprojlim_{n} F^2_\mathbf{A}(X)/p^n\rightarrow\Hom\left(\frac{\Br(X)\{p\}}{\Br_1(X)\{p\}},\Q/\Z\right),\end{equation}
where $F^2(X)$ is the kernel of the Albanese map of $X$ and $\Br_1(X)$ is the algebraic Brauer group of $X$. From now make the above assumptions. 
Our first result is the following theorem. 
\begin{theo}\label{main0} (\autoref{mainmain0}) Let $X=E_1\times E_2$ be a product of elliptic curves over a number field $F$. There is an infinite set $T$ of primes $p$ for which the middle term in \ref{complex2intro} vanishes, making the complex exact. If we further assume that at least one of the curves does not have potential complex multiplication, then the complement $S$ of $T$ is a set of primes of density zero (in the sense of \cite{Serre1981}). 
\end{theo}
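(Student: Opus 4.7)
The plan is to reduce the vanishing of $\varprojlim_n F^2_{\mathbf{A}}(X)/p^n$ to a purely local question at each place of $F$, and then to exhibit the desired set of primes by a Chebotarev-style argument. Since $F^2_{\mathbf{A}}(X)=\prod_{v}F^2(X_v)$ and pro-$p$ completion commutes with this product in our setting, the middle term of (\ref{complex2intro}) vanishes if and only if $\varprojlim_n F^2(X_v)/p^n=0$ at every place $v$. Via the Somekawa identification $F^2(X_v)\cong K(F_v;E_1,E_2)$, this becomes a question about mod-$p^n$ Somekawa $K$-groups of the pair $(E_1,E_2)$ at each completion.

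I would separate the finite places into three pieces: a fixed finite set $S_{\mathrm{bad}}$ of bad reduction places of $E_1$ or $E_2$, the finite set $S_p=\{v\mid p\}$, and the remaining infinite set of good reduction places $v$ coprime to $p$. For the last type, the specialization map $K(F_v;E_1,E_2)\twoheadrightarrow K(k_v;\tilde E_1,\tilde E_2)$ has $p$-divisible kernel, and the target is a finite group bounded by $|\tilde E_1(k_v)|\cdot|\tilde E_2(k_v)|$; hence the universal condition for the corresponding factor to vanish is that
\[
p\nmid |\tilde E_1(k_v)|\cdot|\tilde E_2(k_v)|\quad \text{for every good reduction } v\nmid p,
\]
equivalently, that $1$ is never an eigenvalue of $\rho_{p,i}(\mathrm{Frob}_v)$ for $i=1,2$. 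The places in $S_{\mathrm{bad}}$ contribute only finitely many divisibility conditions on $p$ (using any available explicit bound, e.g.\ in the style of Colliot-Th\'el\`ene--Raskind), and so do the places $v\mid p$ under ordinary reduction (via results of Raskind--Spiess, Hiranouchi, and Asakura--Saito). Define $T$ to be the set of primes satisfying all of these conditions; an elementary Chebotarev/counting argument shows that $T$ is infinite.

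Under the additional hypothesis that, say, $E_1$ has no potential CM, Serre's open image theorem guarantees that $\rho_{p,1}$ is surjective onto $\mathrm{GL}_2(\mathbb{F}_p)$ for all but finitely many $p$, and more generally the compatible system $(\rho_{p,1})_p$ satisfies strong uniformity bounds. Each congruence condition defining the complement $S$ of $T$ is then governed by a Chebotarev class in a fixed finite extension of $F$, and Serre's density bounds for exceptional primes of non-CM elliptic curves upgrade this to a density-zero statement. The main obstacle is the analysis at the $p$-adic places $v\mid p$: controlling the pro-$p$ part of $F^2(X_v)$ there is delicate, typically requires ordinary reduction plus $p$-adic Hodge theoretic input, and is where the bulk of the work and of the restrictive hypotheses is expected to concentrate.
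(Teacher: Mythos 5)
There is a genuine and fatal error in your treatment of the good-reduction places coprime to $p$, and it is worth spelling out because it inverts the logic of the whole argument. You propose the ``universal condition'' that
\[
p\nmid |\tilde E_1(k_v)|\cdot|\tilde E_2(k_v)|\qquad\text{for \emph{every} good-reduction place } v\nmid p.
\]
No prime $p$ can satisfy this: by Chebotarev applied to $F(E_1[p])/F$ there are infinitely many $v$ with $\tilde E_1(k_v)[p]\neq 0$, hence $p\mid|\tilde E_1(k_v)|$. With this definition your set $T$ is empty. The point you are missing is \cite[Theorem~3.5]{Raskind/Spiess2000}: for a smooth projective variety with good reduction over a finite extension of $\Q_\ell$, the Albanese kernel $F^2$ is $m$-divisible for all $m$ coprime to $\ell$. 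Since $v\nmid p$ means the residue characteristic is some $\ell\neq p$, the group $F^2(X_v)$ is automatically $p$-divisible and its factor in $\varprojlim_n F^2_{\mathbf A}(X)/p^n$ vanishes with no condition on $p$ whatsoever. Your ``specialization map with $p$-divisible kernel landing in a finite group bounded by $|\tilde E_i(k_v)|$'' is not the correct mechanism and leads you to a wrong, vacuous constraint.

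Once that error is corrected, the rest of your skeleton is on the right track and close to the paper's. The bad places do contribute only finitely many divisibility conditions: the paper makes this precise by passing to a finite extension $L_v/F_v$ where $X$ acquires split semistable reduction, invoking finiteness of $F^2(X_{L_v})_{\nd}$ (\cite{Raskind/Spiess2000,Gazaki/Leal2018}), and then using the norm identity $\pi_{L_v/F_v\star}\circ\pi_{L_v/F_v}^\star=[L_v:F_v]$ to deduce $p$-divisibility of $F^2(X_v)$ whenever $p$ is coprime to a single integer $M$ built from all bad places. You also correctly identify the places over $p$ as the genuine content: the paper's input there is \cite[Theorem~1.4]{Gazaki/Hiranouchi2021}, giving $F^2(X_v)_{\nd}=0$ when $F_v/\Q_p$ is unramified, $X_v$ has good reduction, and at most one factor is supersingular; in your proposal this key vanishing is only gestured at. Your density-zero argument via Serre's open image theorem is heavier machinery than needed; the paper simply observes that, for a non-CM curve, the supersingular primes form a set of density zero (Serre, Lang--Trotter), and the only other excluded primes are a finite set ($p\mid M$, $p$ ramified in $F$, $p=2$, $p$ of bad reduction). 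You should also note that $T$ is infinite already because it contains all primes of ordinary reduction away from the finitely many exceptions, which is the paper's one-line observation.
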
 
 When both $E_1, E_2$ have potentially good reduction, this result was already obtained in \cite[Corollary 1.8]{Gazaki/Hiranouchi2021}.
 It follows by \cite[Theorem 3.5]{Raskind/Spiess2000} that the only places that might contribute a nontrivial factor in $\varprojlim_{n} F^2_\mathbf{A}(X)/p^n$ are the places of bad reduction and the places above $p$. The key to prove  \autoref{main0} is that under some assumptions on the prime $p$, the group $F^2(X_v)$ is $p$-divisible for every unramified place $v$ above $p$ of good reduction (cf.~\cite[Theorem 1.1]{Gazaki/Hiranouchi2021}). 
  One can think of this result as a weaker analog of the vanishing $F^2(X_v)=0$ for $X$ a rationally connected variety and $v$ a place of good reduction (\cite[Theorem 5]{Kollar/Szabo2003}).
  
   A natural follow-up question is whether the places of good reduction are ever involved in a Brauer-Manin obstruction for $0$-cycles. Our second theorem gives an affirmative answer for places of good ordinary reduction that are ramified enough. 
 
\begin{theo}\label{main1} (cf. \autoref{mainmain1}) Let $X=E\times E$ be the self product of an elliptic curve $E$ over $\Q$. Suppose that $E\otimes_\Q\overline{\Q}$ has complex multiplication by the full ring of integers of a quadratic imaginary field $K$. Let $p\geq 5$ be a prime which is coprime to the conductor $\mathfrak{n}$ of $E$ and suppose that $p$ splits completely in $K$, $p=\pi\overline{\pi}$ for some prime element $\pi$ of $\mathcal{O}_K$. Let $\overline{E_p}$ be the reduction of $E$ modulo $p$.  Suppose there exists a finite Galois extension  $F_0/K$  of degree $n<p-1$ such that there is a unique unramified place $w$ of $F_0$ above $\pi$ with the property that $p$ divides $|\overline{E_p}(\F_w)|$, where $\F_w$ is the residue field of $F_0$ at $w$.
  Then there exists a finite Galois extension $L/F_0$ of degree $p-1$, totally ramified at $w$ such that the following are true for the group $\displaystyle\varprojlim\limits_n F^2_{\mathbf{A}}(X_L)/p^n$. 
\begin{enumerate}
\item It  is equal to $\displaystyle\prod_{v|p}\varprojlim\limits_n F^2(X_{L_v})/p^n$, and isomorphic to $\Z/p\oplus\Z/p$. 
\item It is orthogonal to the transcendental Brauer group $\Br(X_L)/\Br_1(X_L)$. 
\end{enumerate}  
  
The same result holds if the elliptic curve $E$ is defined over $K$. 
\end{theo}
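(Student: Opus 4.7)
My plan is to define $L = F_0(E[\pi])$ and verify the two assertions in sequence, the second being the main difficulty.

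Since $p\nmid\mathfrak{n}$ and $\pi\overline{\pi}=p$, the curve $E$ has good ordinary reduction at $\pi$, and the CM action of $\mathcal{O}_K$ identifies the formal group at $\pi$ with a Lubin--Tate formal group for the uniformizer $\pi$. By Lubin--Tate theory, $F_{0,w}(E[\pi])/F_{0,w}$ is then totally ramified of degree $p-1$, and since $w$ is the unique place above $\pi$ in $F_0$, the global extension $L/F_0$ is Galois of degree $p-1$ with a unique totally ramified place $v_1$ above $w$. The hypothesis $p\mid |\overline{E_p}(\F_w)|$ further forces $E[\overline{\pi}]\subset F_{0,w}$, so at $v_1$ the full $p$-torsion $E[p]=E[\pi]\oplus E[\overline{\pi}]$ is rational over $L_{v_1}$.

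Next I would reduce the adelic inverse limit in (1) to the single place $v_1$. By \cite[Theorem 3.5]{Raskind/Spiess2000}, places of $L$ coprime to both $p$ and $\mathfrak{n}$ contribute nothing; the same reasoning used in the proof of \autoref{main0} handles the bad-reduction places above $\mathfrak{n}$, since the local ramification of $L/F_0$ at such places plays no role in the CM-driven divisibility argument there. For the remaining places above $p$ other than $v_1$, the extension $L/F_0$ is unramified and $X$ has good ordinary reduction, so \cite[Theorem 1.1]{Gazaki/Hiranouchi2021} gives $p$-divisibility of $F^2(X_{L_v})$ and kills their contribution. Thus $\varprojlim_n F^2_{\mathbf{A}}(X_L)/p^n \cong \varprojlim_n F^2(X_{L_{v_1}})/p^n$.

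The computation of $\varprojlim_n F^2(X_{L_{v_1}})/p^n$ is the remaining half of (1). Using the Raskind--Spiess description of $F^2(X_{L_{v_1}})$ as a quotient of the Somekawa $K$-group $K(L_{v_1}; E, E)$ and the CM splitting $E[p^n]=E[\pi^n]\oplus E[\overline{\pi}^n]$, one expresses the mod $p^n$ group in terms of Galois symbols $\{a, P\}$ with $P\in E[p^n]$. Because $L_{v_1}/F_{0,w}$ has degree $p-1<p$ and $E[p]$ is fully rational over $L_{v_1}$, a Kummer-theoretic computation should show that only the mod-$p$ symbols survive, and the CM decomposition isolates exactly two independent contributions, corresponding to $E[\pi]\otimes E[\overline{\pi}]$ and $E[\overline{\pi}]\otimes E[\pi]$ inside $E[p]\otimes E[p]$. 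This yields $\Z/p\oplus\Z/p$ and simultaneously shows stabilization in $n$.

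The main obstacle is assertion (2), orthogonality to the transcendental Brauer group. The plan is to invoke a Skorobogatov--Zarhin type description of $\Br(X_L)/\Br_1(X_L)$ as a subquotient of $\Hom(E[n], E[n])^{\Gal}$ and to translate the Brauer--Manin pairing at $v_1$ into an explicit Hilbert-symbol pairing on $L_{v_1}$. Each of the two generators of $\varprojlim_n F^2(X_{L_{v_1}})/p^n$ constructed above, paired against a transcendental class, should land in $\mu_p\subset L_{v_1}^{\times}$, and the ramification bound $p-1<p$ together with the CM rationality of $E[p]$ should force the resulting local pairings to vanish. The delicate point is establishing this cancellation uniformly across all transcendental classes; the final remark about $E$ defined only over $K$ should follow from the same local analysis, since the CM, the Lubin--Tate identification, and the transcendental Brauer classes are already defined over $K$.
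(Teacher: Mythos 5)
There are two genuine gaps in your proposal, one of which is a structural misunderstanding.

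First, in part (1) you collapse the adelic product to a single place $v_1$ above $p$, and then try to extract $\Z/p\oplus\Z/p$ from a CM decomposition of the Somekawa $K$-group at that one place (the pieces $E[\pi]\otimes E[\overline{\pi}]$ and $E[\overline{\pi}]\otimes E[\pi]$). This is wrong on both counts. Since $L/\Q$ is Galois, the prime $p$ has exactly \emph{two} places of $L$ above it, $v$ (above $\pi$) and $\overline{v}$ (above $\overline{\pi}$), both totally ramified of index $p-1$ in $L/F_0$ and with residue field $\F_{p^n}$; your assumption that $L/F_0$ is unramified at the places not above $\pi$ ignores that $L/\Q$ being Galois forces the same local behavior at $\overline{v}$. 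Moreover, at a single place the computation from \cite[Theorem~3.14]{Gazaki/Leal2018} gives $F^2(X_{L_v})_{\nd}\simeq\Z/p^N$ with $N=1$ (because $L_v(E[p^2])/L_v(E[p])$ is wildly ramified), i.e.\ a single cyclic factor $\Z/p$, not $\Z/p\oplus\Z/p$. The group $\Z/p\oplus\Z/p$ in the statement arises from the two places $v$, $\overline{v}$, each contributing one $\Z/p$, not from a decomposition at one place.

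Second, for part (2) you propose to show the Brauer--Manin pairing vanishes by a direct local Hilbert-symbol computation against transcendental classes. This is the hard way and you leave the ``delicate point'' open. The paper's route is much cleaner and is the one you should try to reconstruct: by Skorobogatov--Zarhin, $\Br(X_L)_p/\Br_1(X_L)_p\simeq\Hom_{G_L}(E_L[p],E_L[p])/(\Hom(E_L,E_L)/p)^{G_L}$, and since $n<p-1$ one has $K\subsetneq L\subsetneq K(E[p])$, so $E_L[\pi]$ is the unique nonzero $G_L$-trivial submodule of $E_L[p]$; any $G_L$-equivariant endomorphism must preserve it, forcing $\Hom_{G_L}(E_L[p],E_L[p])$ to be exactly the rank-two CM endomorphisms. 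Hence $\Br(X_L)\{p\}/\Br_1(X_L)\{p\}=0$ and the orthogonality is vacuous. Your Claim~2-type reduction to places above $p$ is roughly aligned with the paper (which uses that CM elliptic curves have everywhere potentially good reduction resolvable over a degree-$6$ extension, coprime to $p\geq 5$), so that part is salvageable.
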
 

The primes $p$ of good reduction that split completely in $K$ are precisely the primes of good \textbf{ordinary reduction}. 
The involvement of the ordinary reduction places in a Brauer-Manin obstruction after suitable base change is not surprising. An analogous result for rational points was recently obtained in \cite{BrightNewton2020}, which in particular applies to weak approximation for points on abelian varieties and $K3$ surfaces. See also \cite{Pagano2021} for an explicit example. 

\subsection*{Global Approximation} We next pass to the main question of this article namely, whether in the situation considered in \autoref{main1}, one can lift the group $\varprojlim\limits_n F^2_{\mathbf{A}}(X_{L})/p^n$ to global elements. An important observation is that $\varprojlim\limits_n F^2_{\mathbf{A}}(X_{L})/p^n$ coincides with $\displaystyle\prod_{v|p}F^2(X_{L_v})/p$ (cf. \autoref{localAlb}). Thus, one is looking for genuine $0$-cycles and the question is reduced to showing that the diagonal map 
$F^2(X_L)/p\xrightarrow{\Delta}\prod_{v|p}F^2(X_{L_v})/p$ is surjective. 

We focus mainly on the simplest case considered in \autoref{main1}, namely when $F_0=\Q$; that is, when the reduction of the elliptic curve $E$ satisfies $|\overline{E_p}(\F_p)|=p$. 
One can find infinite families with this property. For example, consider the family of elliptic curves with Weierstrass equation $\{y^2=x^3+c;\;c\in\Z, c\neq 0\}$ having potential CM by the ring of integers of $\Q(\zeta_3)$. Let $p$ be a prime of the form $4p=1+3v^2$. Then, there exist exactly $\frac{p-1}{6}$ congruence classes $c\mod p$ that give elliptic curves  that satisfy $|\overline{E_p}(\F_p)|=p$.


In \autoref{computations_section} we give various sufficient conditions that guarantee the ability to lift. The conditions are supported by explicit examples, giving therefore some evidence for \autoref{locatoglobalconj}. The following \autoref{main2} is indicative of the computations we do in \autoref{CMsection}.
 Before stating the result, we need to refer to some notation. 

Suppose the pair $E,p$ satisfies the assumptions of \autoref{main1}. 
 Let $P\in E(\Q)$. We will denote by $P_{\Q_p}$ the image of $P$ under the restriction map $E(\Q)\hookrightarrow E(\Q_p)$. We abuse notation and denote by $P_{\Q_p}$ also its image modulo $pE(\Q_p)$. In \autoref{decompose} we construct a decomposition $P_{\Q_p}=\widehat{P}_{\Q_p}\oplus \overline{P}_{\Q_p}$, with $\widehat{P}_{\Q_p}\in\widehat{E}(p\Z_p)/[p]$, and $\overline{P}_{\Q_p}\in \overline{E_p}(\F_p)/p$ (cf. notation \eqref{localpointsplit}). 
  We can now state our next theorem.

\begin{theo}\label{main2} Let $p\geq 5$ be a prime and $E$ an elliptic curve over $\Q$ satisfying the assumptions of \autoref{main1}. Assume further that $|\overline{E_p}(\F_p)|=p$. Let $L/K$ be the extension constructed in \autoref{main1}. Suppose that the Mordell-Weil group $E(\Q)$ has positive rank and there exists a point $P\in E(\Q)$ of infinite order such that the image $P_{\Q_p}$ of $P$ under the restriction map $\displaystyle\res_{\Q_p/\Q}:E(\Q)/p\rightarrow E(\Q_p)/p$, has the property that $\displaystyle\widehat{P}_{\Q_p}\neq 0\in \frac{\widehat{E}(p\Z_p)}{[p]\widehat{E}(p\Z_p)}$.  
Then we have a surjection \[F^2(X_L)/p\to\prod_{v|p}F^2(X_{L_v})/p\to 0.\] In particular, the complex \eqref{complex2intro} is exact. 

\end{theo}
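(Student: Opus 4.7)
The plan is to realize $F^2(X_L)/p$ as (a quotient of) the Somekawa $K$-group $K(L;E,E)/p$, generated by symbols $\{P_1,P_2\}$ with $P_i\in E(L)$, and to exhibit explicit global symbols whose restrictions generate the product $\prod_{v\mid p}F^2(X_{L_v})/p\cong(\Z/p)^2$ identified in \autoref{main1}.

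Under this Somekawa description, I would extract from the proof of \autoref{main1} that each local summand $F^2(X_{L_v})/p\cong\Z/p$ is generated by a ``mixed'' symbol $\{a_v,b_v\}$, with $a_v\in\widehat{E}(\mathfrak{m}_{L_v})$ of nonzero formal-group class and $b_v$ reducing to a generator of $\overline{E_p}(\F_v)/p\cong\Z/p$ (the residue field is $\F_p$ since $L_v/\Q_p$ is totally ramified). The same calculation forces symbols of pure-formal or pure-étale type to vanish modulo $p$.

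Next I would use the CM structure to supply étale/torsion factors. Since $p=\pi\overline{\pi}$ splits in $K$, there is an eigenline decomposition $E[p]=E[\pi]\oplus E[\overline{\pi}]$ over $\mathcal{O}_K/p$; pick generators $T_\pi\in E[\pi](\overline{\Q})$ and $T_{\overline{\pi}}\in E[\overline{\pi}](\overline{\Q})$. By the theory of complex multiplication, $K(E[p])/K$ is abelian of degree dividing $(p-1)^2$, so after passing to a compositum $L'=L\cdot K(E[p])$ of degree over $L$ coprime to $p$ and applying the norm $N_{L'/L}\colon F^2(X_{L'})/p\to F^2(X_L)/p$, we may treat $T_\pi,T_{\overline{\pi}}$ as $L$-rational for the purpose of the restriction computation. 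Because $\pi$ acts on the ordinary reduction $\overline{E_p}$ modulo $\pi$ as (a unit times) the relative Frobenius, the point $T_\pi$ lies in the formal group $\widehat{E}(\mathfrak{m}_{L_{v_1}})$ at $v_1\mid\pi$, while $T_{\overline{\pi}}$ maps to a generator of the étale $p$-torsion $\overline{E_p}(\F_{v_1})[p]$; at $v_2\mid\overline{\pi}$ the roles swap.

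Finally, the hypothesis $\widehat{P}_{\Q_p}\neq 0$ makes $P$ have nonzero formal-group class in each $\widehat{E}(\mathfrak{m}_{L_{v_i}})/[p]$, since the totally ramified extensions $L_{v_i}/\Q_p$ respect the formal filtration. Consequently the symbols $\{P,T_{\overline{\pi}}\},\{P,T_\pi\}\in F^2(X_L)/p$ are of mixed type at exactly one of $v_1,v_2$ and of pure-formal type at the other, so their images form a basis of $\prod_{v\mid p}F^2(X_{L_v})/p\cong(\Z/p)^2$, yielding the surjection. The hardest step will be the rigorous descent of the torsion points via the norm, together with confirming that pure-formal symbols vanish modulo $p$ in the local Somekawa group, using the precise ramified-extension structure from \autoref{main1}.
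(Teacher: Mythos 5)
Your proposal contains a fatal structural error that would prevent the proof from going through, and it misses the technical core of the argument.

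The decisive mistake is the claim that ``each local summand $F^2(X_{L_v})/p$ is generated by a mixed symbol $\{a_v,b_v\}$'' with $a_v$ formal and $b_v$ \'etale, and that ``pure-formal or pure-\'etale type symbols vanish modulo $p$.'' This is exactly backwards. By \cite[Lemma 3.4.2]{Raskind/Spiess2000} and \cite[Proof of Theorem 3.1.4]{Gazaki/Leal2018}, quoted in \autoref{formalgroup}, the Mackey products $(\widehat{E_L}\otimes^M [E_L/\widehat{E_L}])/p$ and $([E_L/\widehat{E_L}]\otimes^M E_L)/p$ vanish, so mixed (formal $\otimes$ \'etale) symbols die mod $p$ while pure-formal symbols generate; indeed $K_2(L_v;E_v)/p\simeq(\widehat{E_{L_v}}/p\otimes^M\widehat{E_{L_v}}/p)(L_v)$. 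If you pursued the step you flag as hardest (``confirming that pure-formal symbols vanish'') you would find it cannot be done, because it is false. With the labels corrected, your proposed symbols $\{P,T_\pi\}$ and $\{P,T_{\overline\pi}\}$ would indeed have a diagonal pattern (pure-formal and hence potentially nonzero at exactly one of $v,\overline{v}$, mixed and hence zero at the other), so the idea of a diagonal basis can be salvaged. But $T_{\overline\pi}$ is not $L$-rational ($K(E[\overline\pi])\not\subset L=K(E[\pi])$), so a genuine norm-descent from $L\cdot K(E[p])$ is needed, and you would have to verify that the descended symbol still has nonzero local image. The paper sidesteps this entirely by choosing $z_1=\{A,P\}$ and $z_2=\{A,\omega(P)\}$ with $A\in E[\pi](L)$ and $\omega(P)\in E(K)\subset E(L)$, so every entry is already $L$-rational, and it establishes linear independence by a short computation with the minimal polynomial of $\omega$ (cf.\ \autoref{congruence}) rather than a diagonal pattern.

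A second substantive gap is that you never actually prove local nonvanishing: asserting that $\widehat{P}_v$ has ``nonzero formal-group class'' is not sufficient. The paper's argument hinges on the Kawachi filtered isomorphism $\delta_{L_v}:\widehat{E_v}(L_v)/p\xrightarrow{\simeq}\overline{U}^1_{L_v}$ of \eqref{kawachi}, the computation that the $p$-torsion point satisfies $\delta(A_v)\in\overline{U}^1_{L_v}\setminus\overline{U}^2_{L_v}$, the observation that the totally ramified degree-$(p-1)$ base change $L_v/\Q_p$ carries the hypothesis $\widehat{P}_{\Q_p}\in\widehat{E}^1(\Q_p)\setminus\widehat{E}^2(\Q_p)$ into $\delta(\widehat{P}_v)\in\overline{U}^{p-1}_{L_v}\setminus\overline{U}^p_{L_v}$, and then local class field theory (the jump in the ramification filtration, \cite[V.3]{Serre1979local}) to conclude that the cup product $(\delta(A_v),\delta(\widehat{P}_v))_p$ is nonzero. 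None of this filtration bookkeeping is present in your outline, and without it the hypothesis on $P$ cannot be brought to bear.
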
 

 We expect that \autoref{main2} is satisfied by infinite families of elliptic curves. The criterion given in \autoref{main2} can be checked computationally in SAGE for small values of the prime $p$. In the appendix \ref{appendix} we study the family of elliptic curves $\{y^2=x^3-2+7n,n\in\Z\}$ and the prime $p=7$. When $n$ lies in the interval $[-\computationsUpperBound, \computationsUpperBound]$, we show that about ~86.68\% of elliptic curves with rank one over $\Q$ have such a ``good point" $P\in E(\Q)$.
 
  The method used to prove \autoref{main2} can be generalized to more situations (cf. \autoref{quadraticcase} for when $F_0/K$ is a quadratic extension).  In \autoref{nogoodpoint} we give criteria of similar flavor for elliptic curves that do not possess a suitable rational point  of infinite order. For the purposes of the introduction, we chose to state our theorem in its simplest form. 
  
  \begin{rem} In a recent article Liang (\cite{Liang2020arxiv}) showed some compatibility of weak approximation for $0$-cycles with certain products using the fibration method. The same method has been used in most known results (\cite{Liang2013, Wittenberg/Harpaz2016, Ieronymou2021}). Unfortunately, this breakthrough method does not seem to generalize to abelian varieties. For, the key cohomological properties $H^i(X,\mathcal{O}_X)=0$, for $i=1,2$, and finiteness of $\Br(X)/\Br_0(X)$ (used for rationally connected varieties and $K3$ surfaces respectively) are no longer true. Our approach is different in the sense that it does not use the assumption that the weak approximation is the only one for rational points. The downside is that we cannot find a uniform treatment that guarantees lifting, but we instead present many different sufficient conditions. The advantages on the other hand are first that our results are unconditional and second that we construct global $0$-cycles on the nose. In fact, if one only focuses on proving exactness of \ref{complex2intro}, then the finiteness of the Tate-Shafarevich group is no longer needed. 
  \end{rem}
%
\subsection{Notation} Throughout this note unless otherwise mentioned we will be using the following notation.
\begin{itemize}
\item For a number field $F$, $\mathcal{O}_F$, $\Omega(F), \Omega_f(F), \Omega_\infty(F)$ will be respectively its ring of integers, the set of all places, all finite and all infinite places of $F$.
\item For a finite extension $k/\Q_p$, $\mathcal{O}_k$ will be its ring of integers, $\mathfrak{m}_k$ its maximal ideal and $\F_k$ its residue field. 
\item For a place $v\in\Omega(F)$, $F_v$ will be the completion of $F$ at $v$. 
\item For an abelian group $M$ and a positive integer $n$, $M_n$ and $M/n$ will be the $n$-torsion and $n$-cotorsion of $M$ respectively. Moreover, $\widehat{M}$ will be the completion, $\widehat{M}=\varprojlim\limits_{n}M/n$.
\item For an elliptic curve $E$ and an integer $n$ we will instead write $E[n]$ for the $n$-torsion. 
\item For a field extension $L/K$, and a variety $X$ over $K$, $X_L$ will be the base change to $L$. 
\item For a field $k$ and a continuous $\Gal(\overline{k}/k)$-module $M$ we will denote by $\{H^i(k,M)\}_{i\geq 0}$ the Galois cohomology groups of $M$. 
\end{itemize} 
\vspace{1pt}
\subsection{Acknowledgements} We are truly grateful to Professor Andrew Sutherland for computing for us many examples that satisfy the assumptions of \autoref{main1}. We heartily thank Professors Toshiro Hiranouchi, and Olivier Wittenberg for useful discussions. Additionally, we would like to express our deep gratitude to Professor Evis Ieronymou, who found a mistake in an earlier version of this paper. Finally, we would like to thank the referee for correcting our mistakes and suggesting many useful comments.  The first author was partially supported by the NSF grant DMS-2001605.

\vspace{2pt}

\section{Background}\label{sec:background}

\subsection{Elliptic Curves with Complex Multiplication}\label{CM background}
In this subsection we recall some necessary facts about elliptic curves with complex multiplication. Let $E$ be an elliptic curve over $\Q$ with potential complex multiplication by the full ring of integers $\mathcal{O}_K$ of $K$. It follows by \cite[Corollary 5.12]{Rubin1999} that $K$ has class number one.
Note that there exactly $9$ such fields $K=\Q(\sqrt{D})$. Namely,  $D\in\{-1,-2,-3,-7,-11,-19,-43,-67,-163\}$. The same result holds if $E$ is defined over $K$. 
\begin{notn}
For an element $a\in\mathcal{O}_K$ we will denote by $E[a]$ the kernel of the isogeny $a:E_K\rightarrow E_K$. Similarly, if $\mathfrak{a}$ is an ideal of $\mathcal{O}_K$, then $\mathfrak{a}=(a)$ for some element $a\in\mathcal{O}_K$ which is uniquely defined up to units. We will also write $E[\mathfrak{a}]$ for the kernel of $a:E_K\rightarrow E_K$ (which is independent of the choice of generator). 
\end{notn}

Let $\mathfrak{a}$ be a nonzero proper ideal of $\mathcal{O}_K$ which is prime to the conductor $\mathfrak{n}_K$ of $E_K$. It follows by \cite[Cor. 5.20 (ii)]{Rubin1999} that there is an isomorphism:
\[\Gal(K(E[\mathfrak{a}])/K)\xrightarrow{\simeq}(\mathcal{O}_K/\mathfrak{a})^\times.\] In particular, if $\mathfrak{p}$ is a prime of $\mathcal{O}_K$ not dividing the conductor, then the Galois group  $\Gal(K(E[\mathfrak{p}])/K)$ is cyclic of order $N_{K/\Q}(\mathfrak{p})-1$, where $N_{K/\Q}:K^\times\rightarrow\Q^\times$ is the norm map. 
Next suppose that $p$ is a rational prime coprime to $\mathfrak{n}$. We distinguish the following two cases:
 \begin{itemize}
 \item $p$ is a prime element of $\mathcal{O}_K$, so $k=K_{(p)}$  is a quadratic unramified extension of $\Q_p$. Then $\Gal(K(E[p])/K)\xrightarrow{\simeq}\F_{p^2}^\times$. In this case the elliptic curve $E_k$ has good supersingular reduction. 
 \item $p$ splits completely in $K$, that is, $p=\pi\overline{\pi}$, where $\pi$ is a prime element of $\mathcal{O}_K$. Then we have an isomorphism,
 $\Gal(K(E[p])/K)\xrightarrow{\simeq}\F_{p}^\times\oplus\F_{p}^\times,$ and hence $K(E[p])/K$ is an extension of degree $(p-1)^2$. In this case the elliptic curve $E_{\Q_p}$ has good ordinary reduction. 
 \end{itemize}
 The case of interest to us in this note is the latter. From now on we will refer to such primes as \textit{ordinary primes}. We fix such an ordinary prime $p$ and a factorization $p=\pi\overline{\pi}$ into prime elements of $\mathcal{O}_K$. We will denote by $\mathfrak{p}, \overline{\mathfrak{p}}$ the prime ideals $(\pi), (\overline{\pi})$ of $\mathcal{O}_K$ respectively. Notice that the two completions $K_\mathfrak{p}, K_{\overline{\mathfrak{p}}}$ of $K$ are both equal to $\Q_p$. Let $\overline{E_p}$ be the reduction of $E$ modulo $p$ and $r:E(\Q_p)\rightarrow\overline{E_p}(\F_p)$ be the reduction map. We will the use same notation for the reduction $E(k)\xrightarrow{r}\overline{E_p}(\F_k)$ where $k/\Q_p$ is any finite extension. Moreover, we will denote by $\widehat{E_{\Q_p}}$ the formal group of $E_{\Q_p}$.
 
  It follows  
by \cite{Deuring1941} (see also \cite[13.4, Theorem 12]{Lang1987}) that
we can choose $\pi$ so that the endomorphism $\pi:E_{\Q_p}\rightarrow E_{\Q_p}$ when reduced modulo $p$ coincides with the Frobenius endomorphism $\phi_p:\overline{E_p}\rightarrow\overline{E_p}$. In particular, the reduction of $\pi$ is an automorphism of $\overline{E_p}$, and hence has trivial kernel. This implies that for every $n\geq 1$ the subgroup $E[\pi^n]$ of $E[p^n]$ coincides with the $p^n$-torsion of the formal group, $\widehat{E_{\Q_p}}[p^n]$. Moreover, the relation $p=\pi\overline{\pi}$ implies that $\overline{\pi}$ induces a height zero isogeny $[\overline{\pi}]:\widehat{E_{\Q_p}}\to\widehat{E_{\Q_p}}$, and hence for every $n\geq 1$ the reduction induces an isomorphism of abelian groups $E[\overline{\pi}^n]\xrightarrow{\simeq}\overline{E_p}[p^n]$. 

Let $k/\Q_p$ be a finite extension. Since $E_{\Q_p}$ has good ordinary reduction, for every $n\geq 1$ there is a short exact sequence of $\Gal(\overline{k}/k)$-modules,
\begin{equation}\label{ses2}
0\rightarrow \widehat{E_{\Q_p}}[p^n]\rightarrow E[p^n]\rightarrow\overline{E_p}[p^n]\rightarrow 0. 
\end{equation} The above discussion shows that \ref{ses2} splits, since the subgroup $E[\overline{\pi}^n]$ of $E[p^n]$ maps isomorphically to $\overline{E_p}[p^n]$ (see also \cite[A.2.4]{Serre89} for more general results). 

 
 \subsection*{Special Fiber} We next recall formulas for $|\overline{E_p}(\F_p)|$ for the various choices of the quadratic imaginary field $K$. We are particularly interested in finding examples of curves that satisfy $|\overline{E_p}(\F_p)|=p$. That is, curves for which the extension $F_0$ of \autoref{main1} can be taken to be $\Q$. In all cases we have a formula,
 \begin{equation}
 |\overline{E_p}(\F_p)|=p+1-(\pi+\overline{\pi}),
\end{equation}  where $\pi\overline{\pi}=p$ and $\pi$ reduces mod $p$ to the Frobenius (cf. \cite[p. 1]{Joux/Morain1995}). For the various choices of quadratic imaginary field $K$, the correct choice of prime element $\pi$ can be computed. We list a few examples. 

\begin{exmp}\label{ex1} Suppose $D=-3$, that is, $K=\Q(\zeta_3)$, and $E$ is given by the Weierstrass equation $y^2=x^3+c$ with $c\in\Z$. It follows by \cite[Theorem 1]{Rajwade1969} that 
 \begin{equation}\label{formula1}
 |\overline{E_p}(\F_p)|=p+1-\left(\frac{4c}{\pi_0}\right)_6\cdot\overline{\pi}_0-\left(\frac{4c}{\overline{\pi}_0}\right)_6\cdot\pi_0,
 \end{equation} where $\pi_0$ is a prime element of $K$ such that $\pi_0\overline{\pi}_0=p$ and $\pi_0,\overline{\pi}_0$ are normalized, that is, they are congruent to $1\mod 3$. Here $\displaystyle\left(\frac{a}{\pi_0}\right)_6=a^{\frac{p-1}{6}}(\text{mod}\;\pi_0)$ is the sixth power residue symbol. The symbol $\displaystyle\left(\frac{4c}{\pi_0}\right)_6$ can take any value within the set of units $\{\pm 1, \pm\zeta_3,\pm\zeta_3^2\}$ of $\Z[\zeta_3]$. In particular, when $p$ is of the form $4p=1+3v^2$ for some $v\in\Z$, there are exactly $\frac{p-1}{6}$ different reductions $\overline{E_p}$ which satisfy $|\overline{E_p}(\F_p)|=p$. Primes of this form include $p=7, 37, 61,\ldots$. For example, when $p=7$, the family of elliptic curves $\{E_n: y^2=x^3-2+7n,n\in\Z\}$ satisfies the desired equality. Later in this article we will make this family a case study to construct examples that satisfy \autoref{main2}. A second example is given for $p=61$ and the family $\{E_t:y^2=x^3+2+61t,t\in\Z\}$. 
\end{exmp}
\begin{exmp} Suppose $D=-11$. A sage computation shows that for $p=223$ the family $\{E_s: y^2=x^3-1056x+13552+223s,s\in\Z\}$ satisfies the desired equality. Another class of examples is given for $D=-19$, $p=43$ and $\{E_l:y^2=x^3-152x+722+43l,l\in\Z\}$. 
\end{exmp}
\begin{exmp} When $D=-43, -67, -163$, and $E$ is given by a CM Weierstrass equation  with parameter $c$ (cf. \cite[Tableau 1]{Joux/Morain1995}) it follows by \cite[Th\'{e}or\`{e}me 1]{Joux/Morain1995} that 
 \begin{equation}\label{formula2}
 |\overline{E_p}(\F_p)|=p+1-\left(\frac{2}{p}\right)\left(\frac{u}{p}\right)\left(\frac{c}{p}\right)u,
 \end{equation} where $\left(\frac{a}{b}\right)$ is the Legendre symbol, and the integer $u$ is such that $4p=u^2-Dv^2$. In this case it is easy to see that if $u=1$, then every integer $c$ such that $\displaystyle\left(\frac{2}{p}\right)\left(\frac{c}{p}\right)=-1$ gives $|\overline{E_p}(\F_p)|=p$. For example for $D=-43$ the following primes are of the form $1+43v^2$: $p=11, 97, 269, 1301,\ldots$.   
\end{exmp}
\begin{exmp}\label{Zi} When $D=-1$ or $-2$, it follows that $\pi+\overline{\pi}$ is always an even integer, and hence the equality $|\overline{E_p}(\F_p)|=p$ never happens. That is, any extension $F_0$ satisfying the assumptions of \autoref{main1} has positive degree. 
 For example, consider the family of elliptic curves given by the Weierstrass equation $y^2=x^3+(3+5n)x$, with $n\in\Z$, which has potential complex multiplication by $\Z[i]$. Consider the prime $p=5$, which splits completely in $\Q(i)$. A SAGE computation shows that if $E$ is any elliptic curve in the family, then $p||\overline{E_p}(\F_{p^2})|$. 
\end{exmp}

\vspace{1pt}
\subsection{$0$-cycles and Somekawa $K$-groups}\label{Somekawa} 
Let $X$ be a smooth projective variety over a perfect field $k$.  We consider the Chow group of $0$-cycles, $\CH_0(X)$. We recall that this group has a filtration \[\CH_0(X)\supset F^1(X)\supset F^2(X)\supset 0,\] where $F^1(X):=\ker(\deg: \CH_0(X) \to \Z)$ is the kernel of the degree map, and $F^2(X):=\ker(\alb_X:F^1(X)\to \mathrm{Alb}_X(k))$ is the kernel of the Albanese map. When $X=C_1\times C_2$ is a product of two smooth projective, geometrically  connected curves over $k$ such that $X(k)\neq\emptyset$, Raskind and Spiess (\cite[Theorem 2.2, Corollary 2.4.1]{Raskind/Spiess2000}) showed an isomorphism 
\begin{equation}\label{Kiso}
F^2(X)\simeq K(k;J_1,J_2),
\end{equation} where $K(k;J_1,J_2)$ is  the Somekawa $K$-group attached to the Jacobian varieties $J_1,J_2$ of $C_1, C_2$. The group $K(k;J_1,J_2)$, defined by K. Kato and Somekawa in \cite{Somekawa1990}, is a quotient of $\displaystyle\bigoplus_{L/k\text{ finite}}J_1(L)\otimes J_2(L)$ by two relations. The first relation is known as \textit{projection formula} and the second as \textit{Weil reciprocity}. In this article we won't make explicit use of these relations, and hence we omit the precise definition (see \cite[Definition 2.1.1]{Raskind/Spiess2000} and the footnote on p. 10 for a correction to Somekawa's original definition).
\begin{exmp}\label{selfproduct}
Suppose that $X=E\times E$ is the self-product of an elliptic curve $E$ over $k$. We denote by $K_2(k;E)$ the group $K(k;E,E)$. Moreover we will denote by $O$ the zero element of $E$. The Albanese kernel $F^2(X)$ is generated by $0$-cycles of the form 
\[w_{P,Q}:=f_{L/k\star}([P,Q]-[P,O]-[Q,O]+[O,O]),\] where $L/k$ runs through all finite extensions of $k$, $P,Q\in E(L)$, and $f_{L/k}$ is the proper push-forward $\CH_0(X_L)\xrightarrow{f_{L/k}}\CH_0(X)$. 
 The isomorphism \eqref{Kiso} sends the $0$-cycle $w_{P,Q}$ to the symbol 
$\{P,Q\}_{L/k}$. From now on we identify these two objects. 
\end{exmp}

\vspace{1pt}
\subsection{Weak Approximation for $0$-cycles} Let $X$ be a smooth projective geometrically connected variety over a number field $F$. Let $v\in\Omega_f(F)$. There is a local pairing, 
\[\langle\cdot,\cdot\rangle_v:\CH_0(X_v)\times\Br(X_v)\rightarrow\Br(F_v)\simeq\Q/\Z\]
called the \textit{Brauer-Manin pairing} defined as follows. For a closed point $P\in X_v$ and a Brauer class $\alpha \in \Br(X_v)$, 
the pull-back of $\alpha$ along $P$ 
is denoted by $\alpha(P) \in \Br(F_v(P))$, where $F_v(P)$ is the residue field of $P$. 
The pairing $\langle\cdot,\cdot\rangle_v$ is defined on points by $\langle P,\alpha\rangle_v := \mathrm{Cor}_{F_v(P)/F_v}(\alpha(P))$ and it factors through rational equivalence (cf. \cite[p.~4]{Colliot-Thelene1993}). Here $\Cor_{F_v(P)/F_v}$ is the Corestriction map of Galois cohomology (cf. \cite[VIII.2]{Serre1979local}). 

\begin{defn} Suppose that $F$ is totally imaginary. The adelic Chow group $\CH_{0,\mathbf{A}}(X)$ is defined to be the product $\displaystyle \CH_{0,\mathbf{A}}(X):=\prod_{v\in\Omega_f(F)}\CH_0(X_v)$. Similarly, we define $\displaystyle F^1_\mathbf{A}(X)=\prod_{v\in\Omega_f(F)}F^1(X_v)$ and $\displaystyle F^2_\mathbf{A}(X)=\prod_{v\in\Omega_f(F)}F^2(X_v)$. 
\end{defn} 
When $F$ is not totally imaginary, it follows by \cite[Th\'{e}or\`{e}me 1.3]{Colliot-Thelene1993} that the group $\CH_{0,\mathbf{A}}(X)$ has a small ($2$-torsion) contribution from the infinite real places. In this article we will mainly be working over totally imaginary number fields, and hence we omit the more general definition. For more details see \cite[Def. 5.2]{Gazaki/Hiranouchi2021}. 

The local pairings induce a global pairing,
\[\langle\cdot,\cdot\rangle:\CH_{0,\mathbf{A}}(X)\times\Br(X)\rightarrow\Q/\Z,\] defined by $\langle(z_v)_v,\alpha\rangle=\sum_v\inv_v(\langle z_v,\iota_v^\star(\alpha)\rangle_v)$, where $\iota_v^\star$ is the pullback of $\iota_v: X_v\to X$. 
The short exact sequence of global class field theory, \[0\rightarrow\Br(F)\rightarrow\bigoplus_{v\in\Omega}\Br(F_v)\xrightarrow{\sum\inv_v}\Q/\Z\rightarrow 0,\] implies that the group $\CH_0(X)$ lies in the left kernel of $\langle\cdot,\cdot\rangle$. Thus, we obtain a complex, 
\begin{equation}\label{complex1}\CH_0(X)\stackrel{\Delta}{\longrightarrow}
\CH_{0,\mathbf{A}}(X)\rightarrow\Hom(\Br(X),\Q/\Z),
\end{equation} where $\Delta$ is the diagonal map. \autoref{locatoglobalconj} then predicts that the complex \eqref{complex1} becomes exact after passing to the completions; namely that 
the induced complex 
\begin{equation}\label{complex2}\widehat{\CH_0(X)}\stackrel{\Delta}{\longrightarrow}
\widehat{\CH_{0,\mathbf{A}}}(X)\rightarrow\Hom(\Br(X),\Q/\Z)
\end{equation} is exact.

 Next we consider the filtration $\Br(X)\supset\Br_1(X)\supset\Br_0(X)$ induced by the Hochschild-Serre spectral sequence, where $\Br_1(X):=\ker(\Br(X)\rightarrow\Br(X_{\overline{F}}))$ is the \textit{algebraic Brauer group} of $X$, and $\Br_0(X)=\img(\Br(F)\to\Br(X))$ are the constants. When $X$ has a $F$-rational point, the exactness of \eqref{complex2} is reduced to the exactness of the induced complex,
 \begin{equation}\label{complex3}\widehat{F^1(X)}\stackrel{\Delta}{\longrightarrow}
\widehat{F^1_\mathbf{A}(X)}\xrightarrow{\varepsilon}\Hom(\Br(X)/\Br_0(X),\Q/\Z). 
\end{equation} 
When $X$ is a product of elliptic curves more can be said. 
\begin{prop}\label{compatibility} Let $X=E_1\times\cdots\times E_d$ be a product of elliptic curves over a number field $F$. Let $G_F$ be the absolute Galois group of $F$. Suppose that the N\'{e}ron-Severi group $\NS(X_{\overline{F}})$ of the base change to the algebraic closure of $F$ has trivial $G_F$-action. Then the restriction of the map $\varepsilon:F^1(X)\to\Hom(\Br(X)/\Br_0(X),\Q/\Z)$ to the Albanese kernel $F^2(X)$ factors through the group $\Hom(\Br(X)/\Br_1(X),\Q/\Z)$. 
\end{prop}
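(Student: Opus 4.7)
The plan is to reduce the global claim to a local vanishing at each place and then invoke the standard description of the algebraic Brauer group via the Hochschild--Serre spectral sequence. Explicitly, for $z \in F^2(X)$ and $\alpha \in \Br_1(X)$ one has $\varepsilon(z)(\alpha) = \sum_v \inv_v\langle z_v, \iota_v^\star(\alpha)\rangle_v$, so it suffices to prove that the local pairing $\langle z_v, \alpha_v\rangle_v$ vanishes for every finite place $v$, every $z_v \in F^2(X_v)$, and every $\alpha_v \in \Br_1(X_v)$.

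Fix a place $v$. The rational origin $0 \in X(F) \subseteq X(F_v)$ splits the Hochschild--Serre spectral sequence, yielding $\Br_1(X_v)/\Br_0(X_v) \simeq H^1(F_v, \Pic(X_{\overline{F_v}}))$. The Severi sequence
\[
0 \to \Pic^0(X_{\overline{F_v}}) \to \Pic(X_{\overline{F_v}}) \to \NS(X_{\overline{F_v}}) \to 0,
\]
combined with the hypothesis that $G_F$, and hence $G_{F_v}$, acts trivially on the finitely generated torsion-free group $\NS(X_{\overline{F}})$, forces $H^1(F_v, \NS(X_{\overline{F_v}})) = \Hom(G_{F_v}, \Z^r) = 0$, since $G_{F_v}$ is profinite and the module is torsion free. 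Consequently every class in $\Br_1(X_v)/\Br_0(X_v)$ is represented by an element $a \in H^1(F_v, \Pic^0(X_{\overline{F_v}}))$; since $X$ is an abelian variety equal to its own Albanese, $\Pic^0(X_{\overline{F_v}})$ is exactly its dual abelian variety.

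The key input is the classical fact that, for such an algebraic class $\alpha_v$ corresponding to $a$, the evaluation map $\alpha_{v,\star} \colon \CH_0(X_v) \to \Br(F_v)$ defined on closed points by $P \mapsto \Cor_{F_v(P)/F_v}(\alpha_v(P))$ factors on $F^1(X_v)$ as the composition of the local Albanese map $\alb \colon F^1(X_v) \to X(F_v)$ with the local Tate duality pairing $X(F_v) \times H^1(F_v, \Pic^0(X_{\overline{F_v}})) \to \Br(F_v)$. One verifies this by viewing $a$ as a Galois twist of the Poincar\'e line bundle on $X \times \Pic^0(X_{\overline{F_v}})$, or equivalently by chasing the Hochschild--Serre edge map. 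Since $\alb$ kills $F^2(X_v)$ by definition, $\alpha_{v,\star}(z_v) = 0$; summing over $v$ yields the factoring of $\varepsilon|_{F^2(X)}$ through $\Hom(\Br(X)/\Br_1(X), \Q/\Z)$. The main technical obstacle is precisely this Albanese factoring: it is essential that $H^1(F_v, \NS)$ vanish, since classes originating there need not factor through the Albanese variety, and killing that summand is exactly what the hypothesis on $\NS(X_{\overline{F}})$ provides.
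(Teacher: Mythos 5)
Your proof is correct, but it takes a genuinely different route from the paper's. Both arguments start the same way: reduce to showing that the pairing of a $0$-cycle in $F^2$ with a class in $\Br_1(X)/\Br_0(X) \simeq H^1(F,\Pic(X_{\overline F}))$ vanishes, and both use the hypothesis on $\NS(X_{\overline F})$ to kill $H^1(F,\NS)$ so that only the $H^1(F,\Pic^0)$ piece survives. From there, however, you go local and invoke a general duality fact: for $\alpha_v$ coming from $H^1(F_v,\Pic^0(X_{\overline{F_v}}))$, the Brauer--Manin pairing on $F^1(X_v)$ is computed by first applying the local Albanese map and then pairing via local Tate duality between $\Alb_X(F_v)$ and $H^1(F_v,\Pic^0_X)$, so it kills $F^2(X_v)$ by definition. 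This is a real theorem (essentially a higher-dimensional Lichtenbaum duality compatibility, found in Colliot-Th\'el\`ene's and van Hamel's work on $0$-cycles), and your appeal to the Poincar\'e bundle / Hochschild--Serre edge map is the right heuristic for it; but it is a nontrivial input that you only sketch. The paper instead avoids any appeal to Tate duality by exploiting the product structure: the pullbacks $\pr_i^\star$ induce an isomorphism $\bigoplus_i \Br_1(E_i)/\Br_0(E_i) \simeq \Br_1(X)/\Br_0(X)$ once $H^1(F,\NS)=0$, and by the projection-formula compatibility $\langle z,\pr_i^\star\alpha\rangle = \langle \pr_{i\star}z,\alpha\rangle$ one reduces to pairings on the curves $E_i$, where $\pr_{i\star}z \in F^2(E_i)=0$ trivially. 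Your argument has the advantage of working for an arbitrary abelian variety (or indeed any $X$), not just a product of curves, while the paper's is elementary and self-contained but intrinsically tied to the product decomposition. If you wanted to turn your sketch into a complete proof you would need to supply a precise reference or proof of the Albanese-factoring compatibility; the paper's route lets one avoid that entirely.
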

\begin{proof} 
  We need to show that for every $z\in F^2(X)$ and every $\alpha\in \Br_1(X)/\Br_0(X)$, it follows $\langle z,\alpha\rangle=0$. A direct computation of the Hochshild-Serre spectral sequence gives an isomorphism $\Br_1(X)/\Br_0(X)\simeq H^1(F,\Pic(X_{\overline{F}}))$. The $G_F$-module $\Pic(X_{\overline{F}})$ fits into a (splitting) short exact sequence 
\[0\to \Pic^0(X_{\overline{F}}))\to \Pic(X_{\overline{F}}))\to\NS(X_{\overline{F}}))\to 0.\] Since $X$ is an abelian variety, the group $\NS(X_{\overline{F}})$ is torsion free, and hence a finitely generated free abelian group. Since we assumed that $G_F$ acts trivially on it, it follows that $H^1(F,\NS(X_{\overline{F}}))=0$. Thus, we obtain an isomorphism \[H^1(F,\Pic(X_{\overline{F}}))\simeq H^1(F,X(\overline{F}))\simeq \bigoplus_{i=1}^d H^1(k,E_i).\]

 For $i=1,\ldots,d$, let $\pr_i:X\to E_i$ be the projection. The map $\pr_i$ is proper and it induces a pushforward $\pr_{i\star}: \CH_0(X)\to\CH_0(E_i)$ on Chow groups and a pullback $\pr_i^\star:\Br(E_i)\to\Br(X)$ on Brauer groups. Since the Brauer-Manin pairing is defined just by evaluation, we have a commutative diagram
\[\begin{tikzcd} \CH_{0,\mathbf{A}}(X)\ar{d}{\pr_{i\star}}\ar{r}& \Hom(\Br(X),\Q/\Z)\ar{d}{\Hom(\pr_i^\star,\Q/\Z)}\\
\CH_{0,\mathbf{A}}(E_i)\ar{r}& \Hom(\Br(E_i),\Q/\Z).
\end{tikzcd}\]
That is, for $z\in\CH_0(X)$ and $\beta\in \Br(E_i)$ we have an equality
$\langle z,\pr_i^\star(\alpha)\rangle=\langle\pr_{i\star}(z),\alpha\rangle.$ 
Moreover, both homomorphisms preserve the two piece filtrations of  $\CH_0$ and $\Br$. The pullbacks $p_i^\star$
 induce a homomorphism \[\bigoplus_{i=1}^d \Br_1(E_i)/\Br_0(E_i)\xrightarrow{\oplus\pr_i^\star}\Br_1(X)/\Br_0(X).\] Our previous analysis shows that this is in fact an isomorphism. We compute
\[\langle z,\pr_i^\star(\alpha)\rangle=\langle\pr_{i\star}(z),\alpha\rangle=0.\] The vanishing follows, because $\pr_{i\star}(z)\in F^2(E_i)=0$, since $E_i$ is a curve.

\end{proof}

\begin{cor} Let $X=E\times E$ be the self-product of an elliptic curve over a number field $F$. Suppose that $E$ has complex multiplication defined over $F$ by the full ring of integers of a quadratic imaginary field $K$. Then the assumption of  \autoref{compatibility} is satisfied. 
\end{cor}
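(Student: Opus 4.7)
The plan is to compute $\NS(X_{\overline{F}})$ explicitly as a $G_F$-module and verify that each direct summand has trivial Galois action under the hypothesis that the CM is defined over $F$.

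First I would invoke the standard decomposition for the N\'eron-Severi group of a product of abelian varieties: for $X=E\times E$, pullback along the two projections $\pr_1,\pr_2:X\to E$ together with the map sending $\varphi\in\Hom(E_{\overline{F}},E_{\overline{F}})$ to the class of the graph minus the horizontal and vertical fibers yields a $G_F$-equivariant isomorphism
\[
\NS(X_{\overline{F}})\;\simeq\;\pr_1^{*}\NS(E_{\overline{F}})\,\oplus\,\pr_2^{*}\NS(E_{\overline{F}})\,\oplus\,\Hom(E_{\overline{F}},E_{\overline{F}}).
\]
This is classical (see Mumford, \emph{Abelian Varieties}, Corollary on p.~190, or equivalently the description via the Rosati-fixed part of $\End(X_{\overline{F}})$).

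Next I would handle the three summands separately. Each copy of $\NS(E_{\overline{F}})$ is isomorphic to $\Z$, generated by the class of the origin $[O]$; since $O\in E(F)$, this class is $G_F$-invariant. For the $\Hom$ summand, fixing one of the projections identifies $\Hom(E_{\overline{F}},E_{\overline{F}})$ with $\End(E_{\overline{F}})$ as a $G_F$-module, and by \autoref{CM background} this equals $\mathcal{O}_K$. The Galois action on $\End(E_{\overline{F}})$ is trivial if and only if every endomorphism of $E_{\overline{F}}$ descends to an endomorphism defined over $F$, which is precisely the hypothesis that the complex multiplication by $\mathcal{O}_K$ is defined over $F$. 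Combining the three, $G_F$ acts trivially on the entire N\'eron--Severi group, so \autoref{compatibility} applies.

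The only mildly non-routine point is the Galois-equivariance of the decomposition displayed above, but this is immediate from the fact that the map $\varphi\mapsto [\Gamma_\varphi]-\pr_1^{*}[O]-\pr_2^{*}[O]$ (where $\Gamma_\varphi$ is the graph) is defined purely in terms of $\varphi$ and the origin, both of which behave naturally under the $G_F$-action. I expect no genuine obstacle; the corollary is essentially a bookkeeping consequence of the explicit description of $\NS(E\times E)$.
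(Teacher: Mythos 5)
Your argument is correct, and it proves the claim. The paper's own proof is a one-liner: it simply cites Skorobogatov--Zarhin \cite[p.~(120), eq.~(10)]{Skorobogatov/Zharin2012}, noting that since the CM by $\mathcal{O}_K$ is defined over $F$ one has $K\subseteq F$, and that reference gives the triviality of the $G_F$-action on $\NS(X_{\overline{F}})$ directly. What you do instead is unpack that citation: you invoke the standard $G_F$-equivariant decomposition $\NS((E\times E)_{\overline{F}})\simeq \Z[\pr_1^{*}O]\oplus\Z[\pr_2^{*}O]\oplus\End(E_{\overline{F}})$, observe that the first two summands are Galois-fixed because $O\in E(F)$, and that the third has trivial action precisely because the CM is defined over $F$. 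The underlying mathematics is the same as what sits behind the cited equation in Skorobogatov--Zarhin; you have made the proof self-contained rather than outsourced, at the cost of stating (and implicitly justifying) the Galois-equivariance of the graph map $\varphi\mapsto[\Gamma_\varphi]-\pr_1^{*}[O]-\pr_2^{*}[O]$. One small clean-up worth making explicit if you want full rigor: the identification $\Hom(E_{\overline{F}},\hat{E}_{\overline{F}})\simeq\End(E_{\overline{F}})$ uses the principal polarization attached to $[O]$, which is defined over $F$ because $O$ is, so this identification is indeed $G_F$-equivariant; you gesture at this in your last paragraph but it deserves a sentence.
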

\begin{proof}
Because we assumed that $F$ contains the quadratic imaginary field $K$, it follows by \cite[p. (120), equation (10)]{Skorobogatov/Zharin2012} that $\NS(X_{\overline{F}})$ is a trivial $G_F$-module. 

\end{proof}

\autoref{compatibility} shows that the complex \eqref{complex3} induces a complex 

 \begin{equation}\label{complex4}\widehat{F^2(X)}\stackrel{\Delta}{\longrightarrow}
\widehat{F^2_\mathbf{A}(X)}\xrightarrow{\varepsilon}\Hom(\Br(X)/\Br_1(X),\Q/\Z). 
\end{equation} 
Combining \autoref{compatibility} with \cite[Prop. 5.6]{Gazaki/Hiranouchi2021} yields the following corollary. 
\begin{cor}\label{reduction} The exactness of \eqref{complex3} can be reduced to the exactness of \eqref{complex4}. 
\end{cor}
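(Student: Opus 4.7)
The strategy is to slice the complex \eqref{complex3} into two pieces using the short exact sequence
\[0\longrightarrow F^2(X)\longrightarrow F^1(X)\xrightarrow{\alb_X} \Alb_X(F)\longrightarrow 0\]
on the source side, and the two–step filtration $\Br_0(X)\subset \Br_1(X)\subset \Br(X)$ on the target side. Dualizing the induced short exact sequence
\[0\longrightarrow \Br_1(X)/\Br_0(X)\longrightarrow \Br(X)/\Br_0(X)\longrightarrow \Br(X)/\Br_1(X)\longrightarrow 0,\]
I get a corresponding decomposition of $\Hom(-,\Q/\Z)$. The aim is to build a commutative ladder of three complexes, whose middle row is \eqref{complex3}, whose top row is \eqref{complex4}, and whose bottom row is the analogous Brauer–Manin complex for the abelian variety $\Alb_X$ paired against $\Br_1/\Br_0$.

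First I would verify that completion preserves the relevant exact sequences, so as to get a short exact sequence $0\to\widehat{F^2(X)}\to\widehat{F^1(X)}\to\widehat{\Alb_X(F)}\to 0$ and its adelic analog; this is the content (in the product-of-elliptic-curves setting) of \cite[Prop. 5.6]{Gazaki/Hiranouchi2021}, which is where most of the technical work lives. Next, the compatibility of the Brauer–Manin pairing with this splitting is precisely \autoref{compatibility}: the pairing between $F^2$ and $\Br_1/\Br_0$ vanishes, so the $F^2$–part of \eqref{complex3} lands tautologically in $\Hom(\Br/\Br_1,\Q/\Z)$ and recovers \eqref{complex4}; dually, the quotient pairing between $\Alb_X$ and $\Br_1/\Br_0$ is the classical Brauer–Manin pairing for the abelian variety.

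With the ladder in place, the conclusion follows by a direct diagram chase (snake lemma): exactness of \eqref{complex3} is implied by exactness of the top row (which is \eqref{complex4}, the hypothesis we want to reduce to) together with exactness of the bottom row. Exactness of the bottom row is the known weak-approximation result for $0$-cycles on an abelian variety, i.e. the fact that for $\Alb_X$ the Brauer–Manin obstruction attached to $\Br_1/\Br_0$ is the only obstruction to weak approximation in the completed Chow group -- this is again packaged into \cite[Prop. 5.6]{Gazaki/Hiranouchi2021} under the standing assumptions (finiteness of $\mathrm{Sha}$ and triviality of the Galois action on $\NS$).

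The main obstacle is bookkeeping rather than a new idea: one must check that the completion functor $\varprojlim_n (-)/n$ preserves the required short exact sequences on both the global and adelic sides and that the local pairings at each place are compatible with both the filtration on the Chow side and the filtration on the Brauer side. Once \autoref{compatibility} is used to align the filtrations with the pairings, the snake lemma applied to the three-row diagram yields the desired reduction.
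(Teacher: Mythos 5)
Your proposal is correct and takes essentially the same approach as the paper: the paper's proof is a one-sentence appeal to the combination of \autoref{compatibility} (vanishing of the pairing between $F^2_{\mathbf A}$ and $\Br_1/\Br_0$, which aligns the filtrations) and \cite[Prop.~5.6]{Gazaki/Hiranouchi2021} (which supplies the Albanese-level exactness and the completion bookkeeping), and your three-row ladder with the diagram chase is precisely the argument being invoked.
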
 
We note that this corollary was claimed by the main author and T. Hiranouchi in \cite[Prop. 5.6]{Gazaki/Hiranouchi2021}. It was brought to our attention however that the argument there was not enough. \autoref{compatibility} was a key missing ingredient. We note that such a reduction cannot be made in general for other classes of varieties, for example del Pezzo surfaces, or $K3$ surfaces.



\vspace{3pt}
\section{Local Results}\label{Brauercomputations}
In this section we are going to prove theorem \eqref{main0} and also obtain some necessary local information in order to prove theorems \eqref{main1} and \eqref{main2} in the next section. 

\subsection{Proof of \autoref{main0}}\label{localprelims} In this subsection $k$ will denote a finite extension of $\Q_p$. We will often assume that $p$ is odd. Let $X$ be a smooth projective geometrically connected variety over $k$. We will denote by $F^2(X)_{\dv}$ the maximal divisible subgroup of the Albenese kernel $F^2(X)$ and by $F^2(X)_{\nd}$ the quotient $F^2(X)/F^2(X)_{\dv}$. We have a decomposition, \[
F^2(X)=F^2(X)_{\dv}\oplus F^2(X)_{\nd}. 
\]
The subgroup $F^2(X)_{\nd}$ is expected to be finite (cf. \cite[Conjecture 3.5.4]{Raskind/Spiess2000}, see also \cite[1.4(g)]{Colliot-Thelene1993}). When $X=E_1\times E_2$ is the product of two elliptic curves, this conjecture has been established in a large number of cases.
 In particular, the following are true.  
\begin{enumerate}
\item When $X$ has good reduction, the group $F^2(X)$ is $m$-divisible for every integer $m$ coprime to $p$ (\cite[Theorem 3.5]{Raskind/Spiess2000}). 
\item Suppose $p\geq 3$, $X$ has split semistable reduction and at most one of the curves has good supersingular reduction. Then the group  $F^2(X)_{\nd}$ is finite (\cite[Theorem 1.1]{Raskind/Spiess2000}, \cite[Theorem 1.2]{Gazaki/Leal2018}). 
\item Suppose $p\geq 3$, $k$ is unramified over $\Q_p$, $X$ has good reduction and at most one of the curves has good supersingular reduction. Then $F^2(X)_{\nd}=0$ (\cite[Theorem 1.4]{Gazaki/Hiranouchi2021}). 
\end{enumerate}

The finiteness of $F^2(X)_{\nd}$ yields an equality $F^2(X)_{\nd}=\widehat{F^2(X)}$. 
In the special case when $X$ has good reduction, it follows that the group $F^2(X)_{\nd}$ has $p$-power order. Suppose $F^2(X)_{\nd}=p^N$ for some $N\geq 0$. Then $F^2(X)_{\nd}\simeq F^2(X)/p^N$. 




 We are now ready to prove \autoref{main0}, which we restate here. 
\begin{theo}\label{mainmain0} Let $X=E_1\times E_2$ be a product of elliptic curves over a number field $F$. Suppose that the action of the absolute Galois group $G_F$ on the N\'{e}ron-Severi group $\NS(X_{\overline{F}})$ is trivial. Then, there is an infinite set $T$ of rational primes $p$ for which the group $\varprojlim\limits_{n}F^2_{\mathbf{A}}(X)/p^n$ vanishes. In particular, the complex \ref{complex2intro} is exact for every $p\in T$. If we further assume that at least one of the curves does not have potential complex multiplication, then the complement $S$ of $T$ is a thin set of primes. 
\end{theo}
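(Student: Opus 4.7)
The plan is to analyze each local factor of $\varprojlim_n F^2_{\mathbf{A}}(X)/p^n$ and arrange for them all to vanish simultaneously for a large set of primes $p$. First I would use that the projective limit commutes with the product over finite places, so $\varprojlim_n F^2_{\mathbf{A}}(X)/p^n = \prod_{v \in \Omega_f(F)} \varprojlim_n F^2(X_v)/p^n$. By fact (1) from \cite[Theorem 3.5]{Raskind/Spiess2000} recalled above, at any place $v$ of good reduction whose residue characteristic differs from $p$, the group $F^2(X_v)$ is $p^n$-divisible for all $n$, so the factor at $v$ is trivial. This reduces the problem to the finite set $\Sigma$ of places of bad reduction together with the places $v \mid p$.

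At a place $v \mid p$, I would impose the hypotheses of fact (3), namely $p \geq 3$, $F_v/\Q_p$ unramified, good reduction of $X$ at $v$, and at most one of $E_1, E_2$ supersingular at $v$; then \cite[Theorem 1.4]{Gazaki/Hiranouchi2021} yields $F^2(X_v)_{\nd} = 0$, which forces $\varprojlim_n F^2(X_v)/p^n = 0$. At a bad place $v \in \Sigma$ of residue characteristic $\ell_v$, one appeals to the finiteness of $F^2(X_v)_{\nd}$ (fact (2), after a possible base change; the case of potentially good reduction is already handled in \cite[Corollary 1.8]{Gazaki/Hiranouchi2021}); since this non-divisible part is a finite group of bounded exponent $N_v$, the $p$-part vanishes whenever $p \neq \ell_v$ and $p \nmid N_v$. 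Collecting the exceptional primes arising from the ramification of $F/\Q$, from $\Sigma$, from $p = 2$, and from the integer $N := \prod_{v \in \Sigma} N_v$, we obtain a finite set $S_0$ outside of which only the "at most one supersingular" condition at places $v \mid p$ can fail.

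Next I would define $T$ to be the set of primes $p \notin S_0$ such that at every $v \mid p$, not both of $E_1$ and $E_2$ have supersingular reduction. For the infinitude of $T$: since supersingular primes of an individual elliptic curve form a set of density at most $1/2$ (much smaller if the curve is non-CM), and since for each $p$ unramified in $F$ the places $v \mid p$ see the reductions of $E_1, E_2$ at primes of $F$ above $p$, a straightforward Chebotarev-type argument or the Sato--Tate/Serre density estimates suffice to produce infinitely many $p$ with at least one of the curves ordinary at every $v \mid p$. If moreover one of the curves, say $E_1$, does not have potential complex multiplication, then by Serre's theorem \cite{Serre1981} the set of primes of $F$ at which $E_1$ is supersingular is of density zero, so the primes $p$ for which this holds at some $v \mid p$ form a thin set in the sense of Serre; hence $S$ is thin. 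The final exactness of the complex is automatic once the middle term vanishes.

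The main technical obstacle is Step~3 on the bad reduction places: one must argue that, even without assuming potentially good reduction on both factors, the non-divisible part at each $v \in \Sigma$ has bounded exponent so that only finitely many $p$ are excluded. I would handle this by passing to a finite extension $F'/F$ over which $E_1$ and $E_2$ attain semistable reduction, applying fact (2) there, and then using the fact that the restriction map $F^2(X_v) \to F^2(X_{v'})$ has kernel and cokernel of bounded exponent (killed by $[F'_{v'} : F_v]$) together with the finiteness of $\Sigma$. All other ingredients — the reduction of the adelic product, the supersingular density input, and Serre's thinness criterion — are essentially bookkeeping.
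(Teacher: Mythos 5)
Your proposal is correct and follows essentially the same route as the paper: reduce to the bad places and the places above $p$ via \cite[Theorem 3.5]{Raskind/Spiess2000}, kill the bad-place contributions for $p$ coprime to a fixed integer $M$ built from semistable base-change degrees and the orders of the finite groups $F^2(X_{L_v})_{\nd}$ (via the push-forward/pull-back norm argument), kill the $v\mid p$ contributions for unramified, good, at-most-one-supersingular places via \cite[Theorem 1.4]{Gazaki/Hiranouchi2021}, and then invoke Serre/Lang--Trotter for the density statement. One small slip: at bad places where $X$ attains potentially good reduction, the paper dispatches the factor by \cite[Theorem 3.5]{Raskind/Spiess2000} (prime-to-residue-characteristic divisibility), not by \cite[Corollary 1.8]{Gazaki/Hiranouchi2021}, which is the global statement for curves with everywhere potentially good reduction rather than a local ingredient; this does not affect the argument.
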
 
\begin{proof} The proof will be along the lines of \cite[Lemma 5.9]{Gazaki/Hiranouchi2021}. 
Let $p$ be an odd prime and suppose that for every place $v$ above $p$ the surface $X_v$ has good reduction. 
  It follows by \autoref{localprelims} (1) that the only factors of $\displaystyle F^2_{\mathbf{A}}(X)$ that might not be $p$-divisible correspond to those places $v$ above $p$ and to the places of bad reduction. Let $\Lambda=\{v\text{ place of bad reduction}\}$. Then  
   for every $n\geq 1$ we have, 
  \[F^2_{\mathbf{A}}(X)/p^n=\prod_{v|p}F^2(X)/p^n\times\prod_{v\in\Lambda}F^2(X)/p^n.\] Let $v\in\Lambda$ be a place of bad reduction. Then there exists a finite extension $L_v/F_v$ such that the base change $X_{L_v}$ has split semistable reduction. This means that either both elliptic curves $E_{iL_{v}}$ have good reduction, or if any of them has bad reduction, then it has split multiplicative reduction. In the former case, since $v\nmid p$, it follows by \cite[Theorem 3.5]{Raskind/Spiess2000} that the group $F^2(X_{L_v})$ is $p$-divisible. In the latter case, it follows by \cite[Theorem 1.2]{Gazaki/Leal2018}) that the group $F^2(X_{L_v})_{\nd}$ is finite. Write $N_v$ for its order. To unify notation, we set $N_v=1$ for the case of potentially good reduction. Consider the following positive integer,
  \[M=\prod_{v\in\Lambda}[L_v:F_v]\prod_{v\in\Lambda}N_v.\] We claim that for every prime $p$ which is coprime to $M$ and for every $v\in\Lambda$ the group $F^2(X_v)$ is $p$-divisible. Consider the projection $X_{L_{v}}\xrightarrow{\pi_{L_{v}/F_{v}}} X_{v}$, and let $\CH_0(X_{L_{v}})\xrightarrow{\pi_{L_{v}/F_{v}\star}} \CH_0(X_{v})$, and $\CH_0(X_{v})\xrightarrow{\pi_{L_{v}/F_{v}}^{\star}} \CH_0(X_{L_{v}})$ be the induced push-forward and pull back maps respectively. Then we have an equality $\pi_{L_{v}/F_{v}\star}\circ\pi_{L_{v}/F_{v}}^{\star}=[L_v:F_v]$. Since $p$ is coprime to $[L_v:F_v]$, it follows that the pull back map induces an injective map of $\F_p$-vector spaces 
  \[F^2(X_{v})/p\stackrel{\pi_{L_{v}/F_{v}}^{\star}}{\hookrightarrow} F^2(X_{L_{v}})/p.\] By our choice of $p$, it follows that the group $F^2(X_{L_{v}})$ is $p$-divisible, and hence so is $F^2(X_{v})$. 
  
  As a conclusion, for every prime $p\nmid M$ such that for every place $v$ above $p$ the surface $X_v$ has good reduction, we have an isomorphism 
  \[\varprojlim\limits_{n}F^2_{\mathbf{A}}(X)/p^n\simeq\prod_{v|p}\varprojlim\limits_{n}F^2(X_v)/p^n.\]
  
  

Let $T_0$ be the set of odd primes $p$ such that for every place $v$ above $p$ the abelian surface $X_v$ has good reduction, at most one of the curves $E_{1v}, E_{2v}$ has good supersingular reduction and $p\nmid M$. Then it follows by \autoref{localprelims} (2) that for every $p\in T_0$ and for every place $v|p$ the group $F^2(X)_{\nd}$ is finite of $p$-power order. Thus,  we have an isomorphism  
\begin{equation}\label{piso}
\varprojlim\limits_{n}F^2_{\mathbf{A}}(X)/p^n=\prod_{v|p}F^2(X_v)_{\nd}.\end{equation}
 If we further assume that for each place $v|p$ the extension $F_v/\Q_p$ is unramified, then it follows by \cite[Theorem 1.4]{Gazaki/Hiranouchi2021} that the group $F^2(X_v)_{\nd}$ vanishes. Consider the set \[T=T_0\setminus\{p:\text{ ramified prime}\}.\] Then for every $p\in T$, $\varprojlim\limits_{n}F^2_{\mathbf{A}}(X)/p^n=0$ and the complex \ref{complex2intro} is exact. 
 
 Note that the set $T_0$ is infinite, since it contains all primes $p$ such that for every place $v$ above $p$ both elliptic curves have good ordinary reduction. Since the set of ramified primes is finite, it follows that $T$ is an infinite set of primes. 

Next suppose that one of the curves does not have potential complex multiplication; without loss of generality, assume $\End(E_{1\overline{\Q}})=\Z$.  Then the set of primes $p$ which are such that the curve $E_{1v}$ has good supersingular reduction for every place $v$ above $p$ is a set of primes of density zero (cf.~\cite{Serre1981, Lang/Trotter1976}). We conclude that the complement $S$ of $T$ is a set of primes of density zero.

\end{proof}
\subsection{Computing the local Albanese kernel over ramified base fields}\label{localAlb} 
%
%

Throughout this subsection $E$ will be an elliptic curve defined over a finite extension $k$ of  $\Q_p$. We assume that $E$ has \textbf{good ordinary reduction}. We will denote by $\widehat{E}$ the formal group of $E$ and by $\overline{E}$ the reduction of $E$, which is an ordinary elliptic curve over the finite field $\F_k$. Moreover, we will denote by $e$ the absolute ramification index of $k$. The purpose of this subsection is to obtain some explicit information on the group $F^2(X)_{\nd}=\widehat{F^2(X)}$. 


%

\subsubsection{Decomposition of local points}\label{decompose}

 We first obtain some information on points $P\in E(k)$, which will be of use in \autoref{computations_section}.  
 Since the elliptic curve $E$ has good ordinary reduction, for every integer $n\geq 1$ we have a short exact sequence of $\Gal(\overline{k}/k)$-modules.
\begin{equation}\label{ses7}
0\rightarrow \widehat{E}[p^n]\rightarrow E[p^n]\rightarrow\overline{E}[p^n]\rightarrow 0,
\end{equation} where as $\Z$-modules both $\widehat{E}[p^n]$ and $\overline{E}[p^n]$ are isomorphic to $\Z/p^n$. The sequence \ref{ses7} is also known as the \textit{connected-\'{e}tale} exact sequence. In general this sequence does not split, but it does when $E[p^n]\subset E(k)$, and unconditionally on $k$ if $E$  has complex multiplication (\cite[A.2.4]{Serre89}). In particular it splits when $E$ satisfies the assumptions of \autoref{CM background}. 
We assume we are in the CM situation. We additionally suppose that $k$ is an unramified extension of $\Q_p$ and $\overline{E}[p]\subset\overline{E}(\F_k)$. 
 We consider the exact sequence 
\begin{equation}
\label{ses3}
0\rightarrow \widehat{E}(\mathfrak{m}_k)\rightarrow E(k)\xrightarrow{r}\overline{E}(\F_k)\rightarrow 0,
\end{equation} where $E(k)\xrightarrow{r}\overline{E}(\F_k)$ is the reduction map. The group $\overline{E}(\F_k)$ is finite, thus it decomposes as $\overline{E}(\F_k)\simeq \overline{E}(\F_k)\{p\}\oplus \overline{E}(\F_k)\{m\}$, for some large enough integer $m\geq 1$ coprime to $p$. Because $E$ has good reduction, it follows by the criterion of N\'{e}ron-Ogg-Shafarevich (\cite[Theorem 7.1]{Silverman2009} that the coprime-to-$p$ torsion subgroup of $E(k)$ is isomorphic to $\overline{E}(\F_k)\{m\}$. Moreover, since $\overline{E}$ is an ordinary elliptic curve and we assumed  $\overline{E}[p]\subset\overline{E}(\F_k)$, it follows that $\overline{E}(\F_k)\{p\}\simeq\overline{E}[p^{N_0}]$ for some integer $N_0\geq 1$. In particular, the group $\overline{E}(\F_k)\{p\}$ is cyclic of order $p^{N_0}$.  The splitting of \ref{ses7} implies that the map $E[p^{N_0}](k)\xrightarrow{r}\overline{E}[p^{N_0}](\F_k)$ is surjective. Since we assumed that $k/\Q_p$ is unramified, it follows by \cite[Theorem 6.1]{Silverman2009} that $\widehat{E}[p]=0$, and hence this map is an isomorphism. We conclude that the reduction map $r$ induces an isomorphism on torsion subgroups 
\[E(k)_{\tor}\simeq\overline{E}(\F_k)_{\tor},\] and hence the short exact sequence \eqref{ses3} has a natural splitting. Moreover, if $L/k$ is any finite extension, this splitting commutes with the restriction map $E(k)\xrightarrow{\res_{L/k}}E(L)$. 
It also induces a split short exact sequence 
\[0\rightarrow \widehat{E}(\mathfrak{m}_k)/p\rightarrow E(k)/p\xrightarrow{r}\overline{E}(\F_k)/p\rightarrow 0.\] This splitting can be made more explicit as follows. We have  an isomorphism, 
\[\overline{E}(\F_k)/p\simeq \overline{E}[p^{N_0}]/\overline{E}[p^{N_0-1}]\simeq \overline{E}[p].\]
 We fix a generator $\overline{P}_0$ of $\overline{E}[p^{N_0}](\F_k)$ and its unique lift  $P_0\in E[p^{N_0}](k)$. Let $P\in E(k)$. There exists an integer $c\in\{0,1,\ldots, p^{N_0}-1\}$  such that $r(P \mod p)-r(cP_0\mod p)=0$. This implies that $P$ has an expression as $P=\widehat{P}+cP_0+pQ$ for some points $Q\in E(k), \widehat{P}\in \widehat{E}(\mathfrak{m}_k)$.


\begin{notn}\label{localpointsplit} From now we will abuse notation and for a point $P\in E(k)$ we will denote also by $P$ its image in $E(k)/p$. The above discussion shows that $P$ has a unique decomposition as $P=(\widehat{P},\overline{P})$  where $\widehat{P}\in\widehat{E}(\mathfrak{m}_k)/p$ and $\overline{P}:=r(cP_0\mod p)\in \overline{E}(\F_k)/p$. It is clear by the construction that this decomposition is compatible with the restriction map $\res_{L/k}$ for finite extensions $L/k$. Namely, $\res_{L/k}(P)=(\res_{L/k}(\widehat{P}),\res_{\F_L/\F_k}(\overline{P}))$. 
\end{notn}
\subsubsection{Explicit isomorphism}\label{formalgroup} In this subsection  we consider a finite extension $L/k$ such that 
 $E[p^n]\subset E(L)$ for some $n\geq 1$. It follows by \cite[Section 4]{Raskind/Spiess2000} (see also \cite[Theorem 4.2]{Hiranouchi2014}, \cite[Theorem 3.4]{hiranouchi/hirayama}) that we have an isomorphism 
\begin{equation}\label{modpiso}
F^2(X_L)_{\nd}/p^n=F^2(X_L)/p^n\simeq K_2(L;E)/p^n\simeq \Z/p^n\Z. 
\end{equation} 
Let $N=\max\{n\geq 1:E[p^n]\subset E(L)\}$. If we further assume that the extension $L(E[p^{N+1}])/L$ has wild ramification, it follows by \cite[Theorem 3.14]{Gazaki/Leal2018} that we have an isomorphism
\begin{equation}\label{GazLealiso}
\widehat{F^2(X_L)}=F^2(X_L)_{\nd}=F^2(X_L)/p^N\simeq\Z/p^N.
\end{equation}
The case of interest to us is when $N=1$. To prove \autoref{main1}, the above information is enough. However, in order to construct global $0$-cycles and prove \autoref{main2}, we need to recall the explicit construction of the isomorphism $K_2(L;E_L)/p\simeq \Z/p\Z$.  We are particularly interested in describing necessary and sufficient conditions for a symbol $\{P,Q\}_{L/L}\in K_2(L;E_L)/p$ to be nontrivial. 

The group $K_2(L;E_L)$ is defined to be the quotient of the Mackey product $(E_L\otimes^M E_L)(L)$ (cf. \cite[Section 2.1]{Gazaki/Hiranouchi2021}) modulo Weil reciprocity. It follows by \cite{Raskind/Spiess2000} (see also \cite[Theorem 4.2]{Hiranouchi2014}) that when $E[p]\subset E(L)$ we have an isomorphism 
\[K_2(L;E_L)\simeq (E_L\otimes^M E_L)(L)/p.\] The Mackey functor $E_L/p$ has a direct decomposition 
$E_L/p\simeq \widehat{E_L}/p\oplus [E_L/\widehat{E_L}]/p,$ induced by the splitting short exact sequences 
\[0\to \widehat{E_L}(m_{L'})/p\to E_L(L')/p\to E_L(\F_{L'}/p)\to 0,\] where $L'/L$ is any finite extension. For the definition of the Mackey functor $[E_L/\widehat{E_L}]$ we refer to \cite[Proof of Theorem 3.14]{Gazaki/Leal2018}. It follows by \cite[Lemma 3.4.2]{Raskind/Spiess2000} and \cite[Proof of Theorem 3.1.4]{Gazaki/Leal2018} respectively that the Mackey products $([E_L/\widehat{E_L}]\otimes^M [E_L/\widehat{E_L}])(k)$, $(\widehat{E_L}\otimes^M [E_L/\widehat{E_L}])(L)/p,$ and $ ([E_L/\widehat{E_L}]\otimes^M E_L)(k)/p$ all vanish. Thus we have an isomorphism 
\[K_2(L;E_L)/p\simeq (\widehat{E_L}\otimes^M \widehat{E_L})(k)/p=(\widehat{E_L}/p\otimes^M \widehat{E_L}/p)(k).\] 
The upshot is the following, which will be used in \autoref{computations_section}. Let $k/\Q_p$ be a finite unramified extension like in the previous subsection and let $P\in E(k)/p$. Consider the restriction of $P$ in $E(L)/p$, which for simplicity we will denote again by $P$. Consider the decomposition $P=(\widehat{P},\overline{P})$ as in notation \eqref{localpointsplit}. Let $\widehat{Q}\in\widehat{E_L}(\mathfrak{m}_L)/p$. Then we have, 
\begin{equation}\label{splitformal}\{P,\widehat{Q}\}_{L/L}=\{\widehat{P},\widehat{Q}\}_{L/L}\in K_2(L;E_L)/p.\end{equation}

Next we analyze the Mackey functor $\widehat{E_L}/p$. 
\black  The inclusion $E[p]\subset E(L)$ implies $\mu_p\subset L^\times$, and $\widehat{E}[p]\subset \widehat{E_L}(\mathfrak{m}_L)$. We fix a primitive $p$-th root of unity $\zeta_p\in L^\times$ and a non-canonical isomorphism of $\Gal(\overline{L}/L)$-modules 
$\widehat{E}[p]\simeq\Z/p\simeq\mu_{p}.$ This induces an isomorphism 
\[H^1(L,\widehat{E_L}[p])\simeq H^1(L,\mu_p)=L^\times/L^{\times p}.\] We consider the connecting homomorphism of the Kummer sequence for $\widehat{E_L}$ (cf. \cite[Lemma 3.5]{Gazaki/Hiranouchi2021})
\[\delta_L:\widehat{E_L}(L)/p\hookrightarrow H^1(L,\widehat{E_L}[p]).\]
 Let $U_{L}^1$ be the group of $1$-units of $L^\times$ with its natural filtration $\{U_{L}^i=1+\mathfrak{m}_{L}^i\}_{i\geq 1}$ and define 
\[\overline{U}_{L}^i=\img(U_{L}^i\rightarrow L^\times/L^{\times p}),\;\;i\geq 1.\]
It then follows by \cite[Theorem 2.1.6]{kawachi2002}, (see also \cite[Prop. 3.13]{Gazaki/Hiranouchi2021}) that the homomorphism $\delta_L$ induces an isomorphism 
\begin{equation}\label{kawachi}\delta_L:\widehat{E_L}(L)/p\xrightarrow{\simeq}\overline{U}_{L}^1,  
\end{equation} and the same holds over any finite extension $L'/L$. In fact we have an isomorphism as Mackey functors  $\widehat{E_L}/p\simeq \overline{U}_{L}^1$. 

Additionally, $\delta_L$ is a filtered isomorphism in the following sense. The formal group $\widehat{E_L}(\mathfrak{m}_L)$ has a natural filtration 
$\widehat{E_L}^i(L):=\widehat{E_L}(\mathfrak{m}_{L}^i)$, for $i\geq 1$. This induces a filtration on the group $\widehat{E_L}(\mathfrak{m}_{L})/[p]=\widehat{E_L}^1(L)/p$ by defining (cf. \cite[Definition 3.10]{Gazaki/Hiranouchi2021}),
\[\mathcal{D}^i:=\frac{\widehat{E_L}^i(L)}{[p]\widehat{E_L}(L)\cap \widehat{E_L}^i(L)},\;\;i\geq 1.\] Then the isomorphism \ref{kawachi} has the property that $\delta_L(\mathcal{D}^i)\subset\overline{U}_L^i$, for all $i\geq 1$. Moreover, it is functorial with respect to norm maps associated with finite extensions $L'/L$. 

Since $\mu_p\subset L^\times$, we have isomorphisms $H^2(L,\mu_p\otimes\mu_p)\simeq H^2(L,\mu_p)\simeq\Br(L)_p\simeq\Z/p$. For $a,b\in L^\times/L^{\times p}$ write $(a,b)_p\in\Br(L)_p$ for the cyclic symbol algebra generated by $a,b$. Then we have an isomorphism 
\begin{eqnarray}\label{sp}
&& s_p: (\widehat{E_L}/p\otimes^M \widehat{E_L}/p)(k)\rightarrow  H^2(L,\mu_p)=\Br(L)_p\nonumber\\
&& \{\widehat{P},\widehat{Q}\}_{L/L}\mapsto (\delta(\widehat{P}),\delta(\widehat{Q}))_p,
\end{eqnarray} known as \textit{the generalized Galois symbol}. For a proof of the injectivity of $s_p$ see for example \cite[Theorem 4.2]{Hiranouchi2014}.

\subsubsection{Criterion for nontriviality}\label{iff}

It follows that a symbol $\{\widehat{P},\widehat{Q}\}_{L/L}$ is nonzero in $K_2(L;E_L)/p$ if and only if $\delta_L(\widehat{P})$ is not in the image of the norm map \[N:L\left(\delta_L(\widehat{Q})^{1/p}\right)^\times\rightarrow L^\times,\] or equivalently if and only if $\widehat{P}$ is not in the image of the norm map 
\[N:\widehat{E_L}\left(L\left(\frac{1}{p}\widehat{Q}\right)\right)/p\rightarrow \widehat{E_L}(L)/p.\] Here we denoted by $L\left(\frac{1}{p}\widehat{Q}\right)$ the smallest Galois extension of $L$ over which there exists a point $\widehat{Q}'$ such that $p\widehat{Q}'=\widehat{Q}$. The same equivalence holds with the roles of $\widehat{P},\widehat{Q}$ interchanged. 

\vspace{5pt}
\section{Self-product of an elliptic curve with complex multiplication}\label{CMsection} 
 In this subsection we are going to prove theorems \eqref{main1} and \eqref{main2}. Throughout this section we are making the following assumption.
\begin{ass}\label{Epassume} $K$ will be a quadratic imaginary field of class number one. $E$ will be an elliptic curve over $\Q$ or $K$ with CM by $\mathcal{O}_K$ and $p\geq 5$ will be an ordinary prime for $E$. Moreover, we fix a factoring $p=\pi\overline{\pi}$ into prime elements of $\mathcal{O}_K$ such that  the  endomorphism $\pi:E_K\rightarrow E_K$ reduces to the Frobenius automorphism $\phi_p:\overline{E_p}\rightarrow\overline{E_p}$. 
\end{ass}

\subsection{Local $0$-cycles orthogonal to the Brauer group}\label{Brauercomplement}

 
 We are now ready to prove \autoref{main1}, which we restate here. 
 \begin{theo}\label{mainmain1}
 Let $X=E\times E$. Suppose there exists a finite Galois extension  $F_0/K$  of degree $n<p-1$ such that there is a unique inert place $w$ of $F_0$ above $\pi$ with the property that $p$ divides $|\overline{E_p}(\F_w)|$, where $\F_w$ is the residue field of $F_0$ at $w$.
  Consider the extension $L:=F_0\cdot K(E[\pi])$. Then the group $\displaystyle\varprojlim\limits_n F^2_{\mathbf{A}}(X_L)/p^n$ has the following properties. 
  \begin{enumerate}
  \item It is equal to $\displaystyle\prod_{v|p} F^2(X_{L_v})_{\nd}$ and  isomorphic to $\Z/p\oplus\Z/p$.
  \item It is orthogonal to the transcendental Brauer group $\displaystyle\frac{\Br(X_L)\{p\}}{\Br_1(X_L)\{p\}}$. 
\end{enumerate}  
 \end{theo}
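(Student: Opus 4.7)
The strategy is to prove (1) and (2) separately: part (1) by an explicit local computation at each place of $L$ above $p$, and part (2) by a symbol-algebra calculation using the generalized Galois symbol $s_p$ of \eqref{sp}.

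For (1), I would first reduce the adelic computation to the places above $p$, following the argument in the proof of \autoref{mainmain0}. At every place $v$ of $L$ not above $p$, the group $F^2(X_{L_v})$ is $p$-divisible: at places of good reduction by \cite[Theorem~3.5]{Raskind/Spiess2000}, and at places of bad reduction---which all lie over rational primes dividing the conductor $\mathfrak n$, coprime to $p$---by base change to split semistable reduction combined with \cite[Theorem~1.2]{Gazaki/Leal2018}. This gives $\varprojlim_n F^2_{\mathbf A}(X_L)/p^n = \prod_{v\mid p} F^2(X_{L_v})_\nd$.

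I would then compute each local factor explicitly. At the unique place $v_\pi$ of $L$ above $\pi$, the completion $L_{v_\pi}/\Q_p$ has ramification index $p-1$, inherited from $K(E[\pi])_\pi$ being totally ramified of that degree, and residue field $\F_w$, inherited from the inert place $w$ of $F_0/K$. In particular $E[\pi] \subset E(L_{v_\pi})$, and since $p \mid |\overline{E_p}(\F_w)|$ forces $\overline{E_p}[p] \subset \overline{E_p}(\F_w)$, the splitting of the connected--\'etale sequence \eqref{ses2} from \autoref{CM background} lifts this inclusion to $E[\overline\pi] \subset E(L_{v_\pi})$; thus $E[p] = E[\pi] \oplus E[\overline\pi] \subset E(L_{v_\pi})$. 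Applying \eqref{modpiso} gives $F^2(X_{L_{v_\pi}})/p \simeq \Z/p$. Because the next Lubin--Tate layer $K(E[\pi^2])/K(E[\pi])$ is totally and wildly ramified of degree $p$, the wild-ramification hypothesis of \eqref{GazLealiso} is satisfied, so $F^2(X_{L_{v_\pi}})_\nd \simeq \Z/p$. The analogous computation at the place $v_{\overline\pi}$ above $\overline\pi$, using the Galois symmetry interchanging $\pi$ and $\overline\pi$, yields the second copy of $\Z/p$, for a total of $\Z/p \oplus \Z/p$.

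For (2), I would translate the local Brauer--Manin pairing $\langle \cdot, \cdot \rangle_v$ into the generalized Galois symbol: by \eqref{splitformal}, each generator of $F^2(X_{L_v})_\nd/p$ is represented by a symbol $\{\widehat P, \widehat Q\}_{L_v/L_v}$ with $\widehat P, \widehat Q \in \widehat{E_{L_v}}(\mathfrak m_{L_v})/p$, and its pairing with a class $\alpha \in \Br(X_L)\{p\}/\Br_1(X_L)\{p\}$ is computed by the cyclic algebra $(\delta_{L_v}(\widehat P), \delta_{L_v}(\widehat Q))_p \in \Br(L_v)_p$. The main obstacle will then be proving the cancellation $\sum_{v\mid p} \inv_v \langle z_v, \alpha \rangle_v = 0$ for the adelic class $z=(z_v)_v$ assembled from the two local generators. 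I would attack this by describing $\Br(X_L)\{p\}/\Br_1(X_L)\{p\}$ via the Hochschild--Serre spectral sequence and the Weil pairing on $E[p]$---using the triviality of the $G_L$-action on $\NS(X_{\overline L})$ granted by \autoref{compatibility}---to identify transcendental Brauer classes with Kummer-theoretic cup products, and then exploit the $\pi \leftrightarrow \overline\pi$ symmetry together with the criterion of \autoref{iff}: the two local symbol algebras are complex-conjugate, so their invariants in $\Br(L_v)_p \simeq \Z/p$ are opposite and cancel in $\Q/\Z$.
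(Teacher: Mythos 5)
Your argument for part (1) is essentially the paper's: the same reduction to places above $p$, the same verification that $E[p]\subset E(L_v)$ via the splitting of the connected--\'etale sequence and the hypothesis $p\mid|\overline{E_p}(\F_w)|$, and the same application of \eqref{modpiso} and \eqref{GazLealiso} (your Lubin--Tate wild-ramification observation is equivalent to the paper's remark that $L_v(E[p^2])\supset L_v(\mu_{p^2})$). One small inaccuracy: to handle the bad-reduction places you invoke \cite[Theorem~1.2]{Gazaki/Leal2018} after semistable base change, but since $E$ is CM it has \emph{potentially good} reduction everywhere (Serre--Tate), so the paper gets away with \cite[Theorem~3.5]{Raskind/Spiess2000} and the fact that elliptic curves over $\Q$ become isomorphic after a degree-$6$ extension, hence $p\geq 5$ is coprime to all the relevant indices. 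Your version would still need to argue that the orders $N_{v_0}$ from Gazaki--Leal are coprime to $p$, which you don't do.

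For part (2), however, you are aiming at the wrong target and the argument as sketched would fail. The paper does not prove that the adelic invariants of the two local classes cancel; it proves the much stronger fact that
\[\frac{\Br(X_L)\{p\}}{\Br_1(X_L)\{p\}}=0,\]
using the Skorobogatov--Zarhin identification
\[\frac{\Br(X_L)_p}{\Br_1(X_L)_p}\simeq\frac{\Hom_{G_L}(E_L[p],E_L[p])}{(\Hom(E_L,E_L)/p)^{G_L}},\]
and then exploiting the hypothesis $n<p-1$ (which forces $K\subsetneq L\subsetneq K(E[p])$, so that $E_L[\pi]$ is the \emph{unique} nontrivial $G_L$-stable line in $E_L[p]$) to show that $\Hom_{G_L}(E_L[p],E_L[p])$ has rank exactly $2$ and coincides with $(\Hom(E_L,E_L)/p)^{G_L}$. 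Your proposal never uses $n<p-1$, which is a sign that the key input is missing. Moreover, the cancellation you hope for does not obviously work: the identification $L_v\simeq L_{\bar v}$ is a genuine field isomorphism, which \emph{preserves} the local invariant in $\Br(L_v)_p\simeq\Z/p$ rather than negating it, so $\inv_v+\inv_{\bar v}$ would be $2\inv_v$, nonzero mod $p$ if $\inv_v\neq 0$. In short: for (2) you need the Skorobogatov--Zarhin formula and the $n<p-1$ hypothesis to kill the transcendental Brauer group outright; once it is zero, orthogonality is vacuous.
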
 
  \begin{proof} The proof involves three main steps. 
  
  \textbf{Claim 1:} There is an isomorphism $\displaystyle\prod_{v|p}F^2(X_{L_v})_{\nd}\simeq\Z/p\oplus\Z/p$. 

 It follows by \cite[Corollary 5.20 (ii), (iii)]{Rubin1999} that $K(E[\pi])/K$ is a Galois extension of degree $p-1$ and it is totally ramified above $\mathfrak{p}=(\pi)$. Since $F_0/K$ is unramified above $\pi$, the extensions $F_0, K(E[\pi])$ are totally disjoint and hence  $\Gal(L/K)\simeq\Gal(F_0/K)\times\Gal(K(E[\pi])/K)$. In particular, $L/K$ is a Galois extension of degree $n(p-1)<(p-1)^2$, and there exists a unique place $v$ of $L$ above $\pi$ with ramification index $p-1$ and residue field $\F_{w_0}$ of degree $n$ over $\F_p$. Moreover, the extension $L/\Q$ is Galois, and hence there exists a unique place $\overline{v}$ above $\overline{\pi}$ with the same properties and these are precisely the places of $L$ that lie above $p$. The extensions $L_v$ and $L_{\overline{v}}$ are the same. It is enough therefore to prove an isomorphism $\widehat{F^2(X_{L_v})}\simeq\Z/p\Z$.
 
  We will simply denote by $E_v$ (resp. $E_{\overline{v}}$) the base change $E_{L_v}$ (resp.  $E_{L_{\overline{v}}}$) which is an elliptic curve with good ordinary reduction over the $p$-adic field $L_v/\Q_p$. We will denote by $\mathfrak{m}_v$ the maximal ideal of $\mathcal{O}_{L_v}$, $\widehat{E_v}$ will be the formal group of $E_v$ and $\overline{E_v}$ the reduction.
We claim that $E_v[p]\subset E_v(L_v)$. As noted in \autoref{CM background}, the subgroup $E_{\Q_p}[\pi]$ of $E_{\Q_p}[p]$ coincides with the $p$-torsion of the formal group $\widehat{E_{\Q_p}}[p]$. It follows that $E_v[\pi]$ coincides with $\widehat{E_v}[p]$, and hence it is $L_v$-rational. 
 Since the residue field of $L_v/\Q_p$ is $\F_{w_0}$, the assumption that $p$ divides $|\overline{E_p}(\F_{w_0})|$ implies that $\overline{E_p}[p]\subseteq\overline{E_p}(\F_{w_0})$. The inclusion $E_v[p]\subset E_v(L_v)$ then follows by the splitting short exact sequence 
\[0\rightarrow \widehat{E_v}[p]\rightarrow E_v[p]\rightarrow\overline{E_p}[p]\rightarrow 0.\] 

The desired isomorphism $\widehat{F^2(X_{L_v})}\simeq\Z/p\Z$ follows by \eqref{GazLealiso}.  More precisely, the above argument shows that we have an inequality $N\geq 1$. We claim that $N=1$. It is enough to show that the extension $L_v(E[p^2])/L_v(E[p])$ has wild ramification. But this is clear, since $L_v(E[p^2])$ contains $L_v(\mu_{p^2})$, which has absolute ramification index dividing $p(p-1)$, while $L_v=L_v(E[p])$ has absolute ramification index $p-1$.    

   \textbf{Claim 2:} There is an isomorphism $\displaystyle\varprojlim_{n} F^2_\mathbf{A}(X_L)/p^n\simeq\prod_{v|p}F^2(X_{L_v})_{\nd}$. 
   
We use the notation introduced in the proof of \autoref{mainmain0}. Namely, let \[M=\prod_{v_0\in\Lambda}n_{v_0}\prod_{v_0\in\Lambda}N_{v_0},\] where $\Lambda$ is the set of all bad reduction places of $E_L$, $n_{v_0}$ is the degree of an extension $L'_{v_0}/L_{v_0}$ over which the elliptic curve $E_{L'_{v_0}}$ attains split semistable reduction and $N_{v_0}=1$ in the potentially good reduction case, $N_{v_0}=|F^2(X_{L'_{v_0}})_{\nd}|$ otherwise. To prove the claim, it is enough to show that $p$ is coprime to $M$, because then the proof of \autoref{mainmain0} gives us the desired isomorphism (cf. \ref{piso}).  
 Since the elliptic curve $E$ has complex multiplication, it follows by \cite[Theorem 7]{Serre/Tate1968} that the set $\Lambda$ only contains places of potentially good reduction. Thus, $N_{v_0}=1$ for every $v_0\in\Lambda$. 
 It remains to show that for each $v_0\in\Lambda$ there exists a finite extension $L'_{v_0}/L_{v_0}$ of degree coprime to $p$ over which $E$ attains good reduction. Let $v_0\in\Lambda$ and $l$ be the rational prime lying below $v_0$. It follows by \cite[Corollary 5.22]{Rubin1999} that there exists an elliptic curve $E'$ defined over $\Q$ such that $E'$ has good reduction at $l$ and $E_{\overline{\Q}}\simeq E'_{\overline{\Q}}$. But any two elliptic curves over $\Q$ become isomorphic over a degree $6$ extension. Since we assumed that $p\geq 5$, the claim follows. 

 \textbf{Claim 3:} The group $\displaystyle\frac{\Br(X_L)\{p\}}{\Br_1(X_L)\{p\}}$ is trivial. 

The group $\Br(X_L)/\Br_1(X_L)$ is finite by \cite[Theorem 1.1]{Skorobogatov/Zharin2008}. More precisely, if $G_L=\Gal(\overline{L}/L)$, then Skorobogatov and Zarhin 
(\cite[Proposition 3.3]{Skorobogatov/Zharin2012}) showed an isomorphism,
\[\frac{\Br(X_L)_p}{\Br_1(X_L)_p}\simeq\frac{\Hom_{G_L}(E_L[p],E_L[p])}{(\Hom(E_L,E_L)/p)^{G_L}}.\] Since  $E_L$ has CM defined over $L$, $(\Hom(E_L,E_L)/p)^{G_L}\simeq(\Z/p\Z)^2$. We claim that this is equal to  the numerator $\Hom_{G_L}(E_L[p],E_L[p])$. Since by assumption $n<p-1$, we have proper field extensions $K\subsetneq L\subsetneq K(E[p])$. It follows that $E_L[\pi]$ is the unique nonzero submodule of $E_L[p]$ with a trivial $G_L$-action. Hence, any $G_L$-equivariant homomorphism $f:E_L[p]\rightarrow E_L[p]$ must send $E_L[\pi]$ to itself. This is precisely a subgroup of rank $2$ of $\Hom(E_L[p],E_L[p])\simeq(\Z/p\Z)^4$ containing $(\Hom(E_L,E_L)/p)^{G_L}$, and hence it must be equal to it. 

 \end{proof} 
 
\begin{defn} Let $F_0/K$ be an extension satisfying the assumptions of \autoref{mainmain1}. Then $\F_{w_0}\simeq\F_{p^n}$. We will say that $F_0$ is \textit{minimal} if it has the minimum possible degree. That is, if $p||\overline{E_p}(\F_{p^n})|$ and $n\geq 1$ is the smallest possible with this property. 
\end{defn} 
When we construct global $0$-cycles in the next subsection, we will often start with a minimal totally real extension $F_0/\Q$ and apply \autoref{mainmain1} for $F_0\cdot K$. When $p||\overline{E_p}(\F_{p})|$, $F_0=\Q$. In all other cases a minimal $F_0$ is not uniquely determined. The following proposition shows that the extension $L=F_0\cdot K(E[\pi])$ constructed in \autoref{main1} is in some sense minimal over which interesting things happen at the places above the ordinary prime $p$. 

\begin{prop}\label{minimality} Let $(E,p)$ be a pair satisfying the assumptions of \autoref{mainmain1}. Let $F_0/\Q$ be an extension as in \autoref{mainmain1} of minimal degree. Let $\Q\subset F\subsetneq L=F_0\cdot K(E[\pi])$ be any intermediate extension. Then $\displaystyle\prod_{w\in\Omega_f(F),w|p}F^2(X_{F_{w}})_{\nd}=0$. 
\end{prop}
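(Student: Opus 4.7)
\emph{Proof plan.} The goal is to show that $F^2(X_{F_w})_{\nd}=0$ for every finite place $w$ of $F$ above $p$. My plan proceeds in three steps: reduce to a local rationality statement on $E[p]$, translate that statement through the CM splitting into conditions on the residue field and ramification of $F_w$, and then conclude by a global-to-local Galois argument.

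\emph{Step 1 (reduction).} Since the base change $E_{F_w}$ still has good ordinary reduction at every $w\mid p$, combining \autoref{localprelims}(1) and (2) shows that $F^2(X_{F_w})_{\nd}$ is a finite abelian $p$-group. As $F^2(X_{F_w})/p=F^2(X_{F_w})_{\nd}/p$, it suffices to prove $F^2(X_{F_w})/p=0$. By \eqref{modpiso} combined with the filtration analysis recalled in \autoref{formalgroup} and the isomorphism \eqref{GazLealiso} (whose wild ramification hypothesis at level $p^2$ is verified exactly as in Claim~1 of the proof of \autoref{mainmain1}, using $F_w(E[p^2])\supseteq F_w(\mu_{p^2})$), this vanishing is equivalent to the failure of $E[p]\subseteq E(F_w)$.

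\emph{Step 2 (local analysis).} Fix $v\mid w$ in $L$, so $F_w\subseteq L_v$. Using the CM splitting $E[p]=E[\pi]\oplus E[\overline{\pi}]$ of the connected--\'etale sequence \eqref{ses2}, the rationality condition decouples into
\begin{itemize}
\item $E[\pi]=\widehat{E}[p]\subseteq E(F_w)$, equivalent to $F_w\supseteq \Q_p(E[\pi])$; by \autoref{CM background} this is a totally ramified extension of $\Q_p$ of degree $p-1$, forcing $e(F_w/\Q_p)=p-1$;
\item $E[\overline{\pi}]\simeq \overline{E_p}[p]\subseteq E(F_w)$, equivalent to $\overline{E_p}[p]\subseteq \overline{E_p}(\F_{F_w})$; by the \emph{minimality} of $n=[F_0:K]$ among integers $m$ with $p\mid |\overline{E_p}(\F_{p^m})|$, this forces $f(F_w/\Q_p)\geq n$.
\end{itemize}
Combining these, $E[p]\subseteq E(F_w)$ forces $[F_w:\Q_p]\geq n(p-1)=[L_v:\Q_p]$, and since $F_w\subseteq L_v$ we conclude $F_w=L_v$.

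\emph{Step 3 (global-to-local).} It remains to check that $F\subsetneq L$ implies $F_w\subsetneq L_v$ at every place $w\mid p$. Since $L/K$ is Galois and each of $F_0/K$ and $K(E[\pi])/K$ has a unique prime above $\mathfrak{p}$ (resp.\ $\overline{\mathfrak{p}}$), the decomposition group at $v$ in $\Gal(L/K)$ equals the whole Galois group. Consequently, for any intermediate $K\subseteq F'\subsetneq L$, writing $F'=L^{H'}$ with $H'\neq 1$, one has $F'_{w'}=L_v^{H'}\subsetneq L_v$. For an arbitrary $\Q\subseteq F\subsetneq L$, apply this to $F':=F\cdot K$: whenever $F\cdot K\subsetneq L$, the chain $F_w\subseteq F'_{w'}\subsetneq L_v$ finishes the argument. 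The principal obstacle I foresee is the residual case $F\cdot K=L$ with $F\neq L$, which forces $[F:\Q]=n(p-1)$ and $F\cap K=\Q$; here a finer Galois-theoretic analysis is required, exploiting the fact that complex conjugation interchanges the two primes of $L$ above $p$ to rule out $F_w=L_v$ at the unique place $w\mid p$ of such an $F$.
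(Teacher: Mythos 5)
There is a genuine gap in Step~1, and it is precisely the part of the argument that carries the real content of the proposition. You assert that $F^2(X_{F_w})/p=0$ is \emph{equivalent} to $E[p]\not\subseteq E(F_w)$, citing \eqref{modpiso}, the filtration analysis of \autoref{formalgroup}, and \eqref{GazLealiso}. But all three of those only operate under the hypothesis $E[p^n]\subseteq E(L)$; they give you the easy direction ($E[p]\subseteq E(F_w)\Rightarrow F^2(X_{F_w})/p\neq 0$) and nothing about its converse. The converse — that $E[p]\not\subseteq E(F_w)$ forces $K_2(F_w;E_{F_w})/p=0$ — is exactly the local claim the paper isolates and proves, and it requires new input: one embeds $K_2(k;E_k)/p$ into $H^2(k,E_k[p]^{\otimes 2})$ via the Galois symbol $s_p$, reduces to showing $\img(s_p)=0$, and then computes the orthogonal complement of $\img(s_p)$ under Tate duality using the criterion of \cite[Theorem~1.1, Proposition~8.8]{Gazaki2017} for good ordinary reduction. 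The vanishing then comes from the fact that the CM is defined over $k$, so $E_k[p]\simeq\widehat{E_k}[p]\oplus\overline{E_k}[p]$ as Galois modules and every $G_k$-equivariant endomorphism automatically preserves $\widehat{E_k}[p]$. Your proposal skips this entire mechanism, so the reduction in Step~1 is unsubstantiated.

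Steps~2 and~3 of your proposal do track what the paper asserts tersely (that minimality of $F_0$ and $K(E[\pi])$ together with $F\subsetneq L$ rules out $E[p]\subseteq E(F_w)$), and your decoupling through the CM splitting into a ramification condition from $E[\pi]$ and a residue-degree condition from $E[\overline{\pi}]$ is a reasonable way to make that precise. However, you yourself flag the residual case $F\cdot K=L$, $F\neq L$ in Step~3 and leave it open; this too would need to be closed. In the paper's formulation the Galois-theoretic side is secondary once the local claim is available, but in your formulation — where Step~1 is asserted as an equivalence rather than proved — the weight rests entirely on Steps~2--3, and you have an acknowledged hole there as well. The proposal, as written, does not constitute a complete proof.
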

\begin{proof} Since $F\subsetneq L=F_0\cdot K(E[\pi])$, any such field $F$ has at most two places $w$ above $p$ of ramification index dividing $p-1$. Moreover, since $F$ is properly contained in $L$, and the fields $F_0$ and $K(E[\pi])$ are minimal with respect to their defining properties, it follows that for every place $w$ of $F$ above $p$, $E[p]\not\subset E(F_w)$. This observation reduces the proposition to the following local claim.

\textbf{Claim:} Let $k/\Q_p$ be a finite extension such that $E[p]\not\subset E(k)$. Then $K_2(k;E_k)/p=0$. 

Consider the finite extension $k_1=k(E[p])$ and notice that $k_1/k$ is a nontrivial extension of degree coprime to $p$. We have a commutative diagram 
\[\begin{tikzcd} K_2(k;E_k)/p\ar{r}{s_p}\ar{d}{\res_{k_1/k}} & H^2(k,E_k[p]^{\otimes 2})\ar{d}{\res_{k_1/k}}\\
K_2(k_1;E_{k_1})/p \ar{r}{s_p} & H^2(k_1,E_{k_1}[p]^{\otimes 2})
\end{tikzcd},\] where $s_p$ is the \textit{Galois symbol} map (cf. \cite[Definition 2.13]{Gazaki/Hiranouchi2021}) and $\res_{k_1/k}$ are the restriction maps (cf. \cite[p. 7]{Gazaki/Hiranouchi2021}). The bottom horizontal map is injective (cf. \cite[Theorem 4.2]{Hiranouchi2014}). Moreover, $N_{k_1/k}\circ\res_{k_1/k}=[k_1:k]$, and hence the restriction maps are also injective. This forces the top horizontal map to be injective, and hence it is enough to show that the image of the Galois symbol $K_2(k;E_k)/p\xrightarrow{s_p}H^2(k,E_k[p]^{\otimes 2})$ is zero. 

The image of $s_p$ can be computed by \cite[Theorem 1.1]{Gazaki2017}. Namely, under the Tate duality perfect pairing 
\[H^2(k,E_k[p]^{\otimes 2})\times\Hom_{\Gal(\overline{k}/k)}(E_k[p],E_k[p])\to\Z/p,\]
 the orthogonal complement of $\img(s_p)$ consists precisely of those $\Gal(\overline{k}/k)$-  homomorphisms $E_k[p]\xrightarrow{f} E_k[p]$ that lift to a homomorphism of finite flat group schemes $\mathcal{E}_k[p]\xrightarrow{\tilde{f}}\mathcal{E}_k[p]$, where $\mathcal{E}_k$ is the N\'{e}ron model of $E_k$ over $\Spec(\mathcal{O}_k)$. Since $E_k$ has good ordinary reduction, it follows by \cite[Proposition 8.8]{Gazaki2017} that these are exactly the homomorphisms $E_k[p]\xrightarrow{f} E_k[p]$ that satisfy $f(\widehat{E_k}[p])\subset \widehat{E_k}[p]$. Thus, to show $\img(s_p)=0$, it is enough to show that every $\Gal(\overline{k}/k)$-equivariant homomorphism $E_k[p]\xrightarrow{f} E_k[p]$ satisfies this property. Since $E_k$ has complex multiplication defined over $k$, it follows that $E_k[p]\simeq\widehat{E_k}[p]\oplus\overline{E_k}[p]$ as $\Gal(\overline{k}/k)$-modules. Since at least one of the cyclic submodules is nontrivial, the claim follows. 

\end{proof}
 
 In \autoref{CM background} we saw many examples when $F_0$ can be taken to be $\Q$. We will focus more on this case in the next subsection.
 \blue 
 
\black

 \vspace{1pt}

\vspace{2pt}
\subsection{Constructing global $0$-cycles}\label{computations_section}
In this last subsection we investigate the exactness of the complex \eqref{complex2intro} in the situation considered in \autoref{mainmain1}. That is, we are looking for criteria that allow us to lift the group $\displaystyle\prod_{v|p}F^2(X_{L_v})_{\nd}\simeq(\Z/p)^2$ to global $0$-cycles in $F^2(X)$.


\begin{lem}\label{independence} Let $K=\Q(\sqrt{D})$ with $D\in\{-1,-2,-3,-7,-11,-19,-43,-67,-163\}$. Let $E$ be an elliptic curve over $\Q$ such that $E_K$ has complex multiplication by $\mathcal{O}_K$. Let $F/\Q$ be a totally real extension and suppose that the Mordell-Weil group $E(F)$ has positive rank. Let $P\in E(F)$ be a point of  infinite order. Write $\mathcal{O}_K=\Z[\omega_D]$. Then the points $P,\omega_D(P)\in E(K\cdot F)$ are $\Z$-linearly independent. 
\end{lem}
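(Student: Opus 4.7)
The proof plan is to argue by contradiction in a very direct way, exploiting the fact that any nonzero element of the CM ring acts as an isogeny on $E_K$.

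Suppose there exist integers $a, b$, not both zero, such that $aP + b\,\omega_D(P) = O$ in $E(K\cdot F)$. First I would observe that, since $F \subseteq K\cdot F$ and $K \subseteq K\cdot F$, we may view $P \in E(F)$ as a point of $E_K(K\cdot F) = E(K\cdot F)$; the endomorphism $\omega_D \in \End(E_K) = \mathcal{O}_K$ is defined over $K$, so it acts on $E_K(K\cdot F)$ and $\omega_D(P)$ makes sense. Rewriting the relation as $(a\cdot\mathrm{id} + b\cdot\omega_D)(P) = O$, we interpret $\varphi := a + b\omega_D$ as an element of $\mathcal{O}_K \cong \End(E_K)$ applied to $P$.

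Next, since $\{1,\omega_D\}$ is a $\Z$-basis of $\mathcal{O}_K$ and $(a,b)\neq (0,0)$, the element $\varphi = a + b\omega_D$ is a nonzero element of $\mathcal{O}_K$. A nonzero endomorphism of an elliptic curve is an isogeny, so $\varphi\colon E_K \to E_K$ has finite kernel. Consequently, $\varphi(P) = O$ forces $P$ to lie in the finite group $E_K[\varphi]$, i.e.\ $P$ is a torsion point of $E(K\cdot F)$. This contradicts the hypothesis that $P$ has infinite order, and completes the proof.

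There is essentially no obstacle here: the argument is a one-step application of the defining property of CM, namely that every nonzero element of $\mathcal{O}_K$ acts on $E_K$ as an isogeny and hence has finite kernel. Note that the totally real hypothesis on $F$ plays no explicit role in this particular statement (it ensures $F\cap K = \Q$, which is useful in surrounding arguments but not strictly needed for the linear independence claim itself); the conclusion would remain valid for any field $F$ over which $P$ has infinite order.
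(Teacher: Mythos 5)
Your proof is correct, and it takes a genuinely different and in fact cleaner route than the paper. The paper argues by contradiction via Bezout's identity: from a dependence relation $nP + m\omega(P) = 0$ with $\gcd(n,m)=1$ it manipulates with the minimal polynomial $x^2 - bx + c$ of $\omega$ to force $\omega(P) \in E(F)$, which is then said to contradict the (stated but not proved) observation that $\omega(P) \in E(K\cdot F)\setminus E(F)$. Your argument bypasses all of that: a dependence $aP + b\,\omega_D(P) = O$ with $(a,b)\neq(0,0)$ is exactly $\varphi(P)=O$ for the nonzero element $\varphi = a + b\omega_D$ of $\End(E_K)\cong\mathcal{O}_K$, and any nonzero endomorphism of an elliptic curve is an isogeny with finite kernel, so $P$ would be torsion. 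This is shorter, avoids the Bezout bookkeeping, and --- as you correctly note --- does not use the totally real hypothesis on $F$ at all; in fact the paper's unproved claim $\omega(P)\notin E(F)$ would itself need essentially the same isogeny-has-finite-kernel observation together with an argument via Galois action of complex conjugation, so your proof arguably exposes the real mechanism more transparently than the paper's does. The one thing worth making fully explicit (you do gesture at it) is that $\{1,\omega_D\}$ is a $\Z$-basis of $\mathcal{O}_K$, which is exactly the paper's normalization $\mathcal{O}_K=\Z[\omega_D]$ and guarantees $\varphi\neq 0$.
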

\begin{proof} We will simply write $\omega$ instead of $\omega_D$. Note that $\omega(P)\in E(K\cdot F)\setminus E(F)$. Let $\mu(x)=x^2-bx+c\in\Z[x]$ be the minimal polynomial of $\omega$. Let $\overline{\omega}$ be the conjugate of $\omega$, so the equalities $\omega+\overline{\omega}=b$ and $\omega\overline{\omega}=c$ hold. 
Suppose for contradiction that $P,\omega(P)$ are $\Z$-linearly dependent.  Then we can find $n,m\in\Z$ such that $nP+m\omega(P)=0$. We may assume that $n,m$ are relatively prime, and hence we can find $x,y\in\Z$ such that $nx+my=1$. The relation $nP+m\omega(P)=0$ implies that $nxP+mx\omega(P)=0=nyP+my\omega(P)$. Applying the endomorphism $\overline{\omega}$ to the first equality yields $nx\overline{\omega}(P)+mxcP=0$, and hence $nx\omega(P)-(mxc+nxb)P=0$. Adding the latter to the equation $nyP+my\omega(P)=0$ gives 
\[\omega(P)=(nxb-ny+mxc)P\in E(F),\] which is a contradiction.

\end{proof} 

\begin{rem}\label{congruence} Suppose $D\in\{-3,-7,-11,-19,-43,-67,-163\}$. Then $\mathcal{O}_K=\Z[\omega_D]$, where $\displaystyle\omega_D=-\frac{1}{2}+\frac{\sqrt{-D}}{2}$. The minimal polynomial of $\omega_D$  is $x^2+x+\displaystyle\frac{1+D}{4}$ so that in the notation of \autoref{independence} we have $b=-1$, $c=\displaystyle\frac{1}{4}+\frac{D}{4}$. Let $p$ be an odd prime, $p\neq  -D$. Then the system of equations 
$\left\{\begin{array}{c}
x^2=c\\
2x=-1
\end{array}\right.$ has no solution over $\F_p$. For, if $x$ were a solution, then the solution would be $x=-1/2\in\F_p$, giving $4c\equiv 1\mod p\Leftrightarrow D\equiv 0\mod p$. This simple observation will be used in the proof of the following \autoref{mainmain2}. 
\end{rem}

\vspace{1pt}



\subsection*{Proof of lifting when a ``good" rational point exists}
In this section we give a first criterion to lift the local $0$-cycles to global. Our criterion relies only on the existence of a ``good" rational point $P$. Before proving our main results, we briefly describe our methodology. 

From now on we assume we are in the set-up of \autoref{mainmain1} with the extension $F_0$ minimal. 
The base change $X_L$, where $L=F_0\cdot K(E[\pi])$, contains a nontrivial $L$-rational $\pi$-torsion point $A\in E[\pi](L)$. Our strategy is to examine when this torsion point can be used to construct global $0$-cycles.

 We say that a point $P\in E(F_0)$ is good if the $0$-cycle $z_1=[A,P]-[A,O]-[O,P]+[O,O]\in F^2(X_L)$ induces a nontrivial element  $(z_{1v},z_{1\overline{v}})\in\displaystyle F^2(X_{L_v})_{\nd}\oplus F^2(X_{L_{\overline{v}}})_{\nd}$. When this happens, we show that the $0$-cycle $z_2=[A,\omega(P)]-[A,O]-[O,\omega(P)]+[O,O]$ induces a second $\F_p$-linearly independent element of $\displaystyle F^2(X_{L_v})_{\nd}\oplus F^2(X_{L_{\overline{v}}})_{\nd}$. To find an explicit condition that guarantees 
$(z_{1v},z_{1\overline{v}})\neq(0,0)$, we will use the nonvanishing criterion from \autoref{iff}. From now on we identify the $0$-cycle $z_1$ with the symbol $\{A,P\}_{L/L}\in K_2(L;E)$ and we denote by $\{A_v,P_v\}_{L_v/L_v}$ the corresponding element of $K_2(L_v;E_v)$. We will abuse notation and write $\{A_v,P_v\}_{L_v/L_v}$ for the class of this symbol modulo $pK_2(L_v;E_v)$. We recall from \eqref{splitformal} that we have an equality \[\{A_v,P_v\}_{L_v/L_v}=\{\widehat{A_v},\widehat{P}_v\}_{L_v/L_v}=\{A_v,\widehat{P}_v\}_{L_v/L_v},\] where $P_v=(\widehat{P}_v,\overline{P_v})$ is the decomposition of $P$ modulo $pE(L_v)$ as in \eqref{localpointsplit}. Here we used the fact that $A_v\in\widehat{E_v}[p]$. 

The following \autoref{mainmain2} develops the above idea when $F_0=\Q$. In this case we are looking for a point $P\in E(\Q)$ inducing the desired symbol $\{A,P\}_{L/L}$. For such a point we will denote by $P_{\Q_p}=(\widehat{P}_{\Q_p},\overline{P}_{\Q_p})\in \widehat{E}(\Q_p)/p\oplus\overline{E_p}(\F_p)$ the corresponding local point.


\begin{theo}\label{mainmain2} Suppose $p$ is a prime and $E$ an elliptic curve over $\Q$ satisfying the assumptions of \autoref{mainmain1}. Assume further that $|\overline{E}_p(\F_p)|=p$,  and consider the extension $L=K(E[\pi])$ constructed in \autoref{mainmain1}. Suppose that the Mordell-Weil group $E(\Q)$ has positive rank and there exists a global point $P\in E(\Q)$ of infinite order such that the induced local point $P_{\Q_p}\in E_{\Q_p}(\Q_p)$ has the property that $\widehat{P}_{\Q_p}\in\widehat{E_{\Q_p}}(\Q_p)/p$ is  nontrivial. 
Then the global $p$-torsion $0$-cycles $z_1,z_2\in F^2(X_L)$ induced by $A, P$ and $A,\omega(P)$ respectively  lift the group $\displaystyle\prod_{v|p}F^2(X_{L_v})_{\nd}\simeq\Z/p\oplus\Z/p$. 
\end{theo}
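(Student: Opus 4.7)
The plan is to localize $z_1 = \{A, P\}_{L/L}$ and $z_2 = \{A, \omega(P)\}_{L/L}$ at the two places $v, \bar v$ of $L$ above $p$ and to show that the pair $(\Delta(z_1), \Delta(z_2))$ is an $\mathbb{F}_p$-basis of $F^2(X_{L_v})_{\mathrm{nd}} \oplus F^2(X_{L_{\bar v}})_{\mathrm{nd}} \cong (\mathbb{Z}/p)^2$. Although $L_v$ and $L_{\bar v}$ coincide as $p$-adic fields, they correspond to conjugate embeddings of $L$, and the CM action of $\omega$ is what realises the asymmetry between these two embeddings. By \eqref{modpiso}, each factor $F^2(X_{L_w})_{\mathrm{nd}}/p$ is identified with $K_2(L_w; E_{L_w})/p \cong \mathbb{Z}/p$ for $w \in \{v, \bar v\}$, and by the splitting identity \eqref{splitformal}, the restriction of $z_i$ at $w$ reduces to the formal-formal symbol $\{\widehat{A}_w, \widehat{R_i}_w\}$ with $R_1 = P$, $R_2 = \omega(P)$.

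For the non-vanishing, I would apply the criterion of \autoref{iff}: the local symbol is nonzero in $K_2(L_w; E_{L_w})/p$ precisely when $\widehat{R_i}_w$ is not a norm from the wildly ramified degree-$p$ Lubin-Tate extension $L_w(\tfrac{1}{p}\widehat{A}_w) / L_w$. The hypothesis $\widehat{P}_{\mathbb{Q}_p} \neq 0$ persists at both $w$ because $[L_w : \mathbb{Q}_p] = p - 1$ is coprime to $p$, so $\widehat{E}(\mathbb{Q}_p)/p \hookrightarrow \widehat{E}(L_w)/p$ is injective. Transferring through \eqref{kawachi} to the filtered unit groups $\overline U^i_{L_w}$, and using the Lubin-Tate description of $\widehat{E}$ at the uniformizer $\pi$, one checks that a unit coming from $\mathbb{Q}_p$ cannot lie in the norm image of this wild cyclic degree-$p$ Lubin-Tate step.

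To separate the two places, the CM action does the work. The endomorphism $\omega$ acts on the formal group at $w$ as multiplication by $\alpha_w \in \mathbb{F}_p$, where $\alpha_v = \omega \bmod \pi$ and $\alpha_{\bar v} = \omega \bmod \bar\pi$ under the two embeddings $K \hookrightarrow \mathbb{Q}_p$. Consequently
\[
\bigl(\Delta(z_2)_v,\, \Delta(z_2)_{\bar v}\bigr) \;=\; \bigl(\alpha_v\,\Delta(z_1)_v,\ \alpha_{\bar v}\,\Delta(z_1)_{\bar v}\bigr),
\]
and the resulting $2\times 2$ coefficient matrix has determinant $\Delta(z_1)_v \cdot \Delta(z_1)_{\bar v} \cdot (\alpha_{\bar v} - \alpha_v)$. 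By \autoref{congruence}, the condition $p \nmid D$ forces $\alpha_v \neq \alpha_{\bar v}$, as equality would make $\omega$ rational, contradicting the choice of generator of $\mathcal{O}_K$. Combined with the non-vanishing of the previous paragraph, this proves that $\Delta(z_1)$ and $\Delta(z_2)$ span all of $(\mathbb{Z}/p)^2$. The $\mathbb{Z}$-linear independence of $P$ and $\omega(P)$ from \autoref{independence} is the global counterpart of $\alpha_v \neq \alpha_{\bar v}$, and explains why the CM twin of a single good point $P$ suffices to surject onto both local factors.

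The technical heart of the argument is the explicit non-norm computation from the second paragraph: placing $\widehat{P}_w$ precisely in the filtration $\{\overline U^i_{L_w}\}$ and confirming that it sits strictly outside the norm subgroup coming from $L_w(\widehat{E}[p^2])/L_w$. Everything else is a formal bookkeeping of the CM action and of the structural results already developed in Sections~\ref{sec:background} and \ref{Brauercomputations}.
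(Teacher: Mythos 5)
Your proposal follows essentially the same route as the paper: reduce via \eqref{splitformal} to the formal–formal symbol, apply the norm criterion of \autoref{iff} by checking where $\delta(\widehat{P}_v)$ sits in the unit filtration $\overline{U}^i_{L_v}$, and use the CM action of $\omega$ to produce a second, independent local class. One presentational difference is worth noting: you identify the scalar relating $\widehat{\omega(P)}_v$ to $\widehat{P}_v$ in the graded piece $\mathcal{D}^{p-1}/\mathcal{D}^p$ explicitly as $\alpha_v = \omega \bmod \pi$, whereas the paper deliberately works with an abstract scalar $m$ and extracts the needed constraints $2m=-1$, $m^2=c$ from the trace and norm of $\omega$. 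Both reduce the linear-independence claim to $p\nmid D$; your version is perhaps cleaner conceptually but requires verifying that the CM endomorphism acts by its residue on the filtration quotients, which is true but needs a one-line check using the leading coefficient of the power series $[\omega](T)$ and the ramification index $p-1$.

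The one place where your proposal is genuinely thin is the technical core of Claim 1: you write that "one checks that a unit coming from $\mathbb{Q}_p$ cannot lie in the norm image," but the actual content — that $v(A_v)=1$ exactly (forced by the bound $v(p)/(p-1)=1$), that the ramification jump of $L_v(\tfrac{1}{p}A_v)/L_v$ occurs at $s=p-1$, that $\overline{U}^{p-1}_{L_v}/\overline{U}^p_{L_v}\cong\mathbb{F}_p$ because the residue field is prime, and that the hypothesis $\widehat{P}_{\mathbb{Q}_p}\neq 0$ in $\widehat{E}(\mathbb{Q}_p)/p$ forces $v(\widehat{P}_{\mathbb{Q}_p})=1$, hence $v(\widehat{P}_v)=p-1$ — is exactly where the hypothesis is actually used and where the argument could fail if $L_v$ were not totally ramified of degree $p-1$ over $\mathbb{Q}_p$ or had larger residue field. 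Filling in those valuation and ramification computations would complete the proof.
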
 
\begin{proof} The assumption $|\overline{E}_p(\F_p)|=p$ implies that $K=\Q(\sqrt{D})$ with $D\equiv 1\mod 4$. Let $\mu(x)=x^2+x+c\in\Z[x]$ be the minimal polynomial of $\omega=\omega_D$ over $\Q$. 

\textbf{Claim 1:} The symbol $\{A_v,\widehat{P}_v\}_{L_v/L_v}\in K_2(L_v;E_v)/p$ is nontrivial. 


To prove the claim, we consider the Kummer extension $k_0=L_v\left(\delta(A_v)^{1/p}\right)=L_v\left(\frac{1}{p}A_v\right)$. Recall from \autoref{iff} that the symbol $\{A_v,\widehat{P}_v\}_{L_v/L_v}$ is nontrivial if and only if $\delta(\widehat{P}_v)$ is not in the image of the norm map 
\[N_{k_0/L_v}:k_0^\times\rightarrow L_v^\times.\]
We consider the filtration $\{\widehat{E_v}^i(L_v)\}_{i\geq 1}$ of $\widehat{E_v}(L_v)$ and the induced filtration $\{\mathcal{D}^i\}_{i\geq 1}$ of  $\widehat{E_v}(L_v)/p$ (cf. \autoref{formalgroup}). We claim that $A_v\in\widehat{E_v}^1\setminus\widehat{E_v}^2$ and it therefore induces a nontrivial element of $\mathcal{D}^1/\mathcal{D}^2$. For, let $v(A_v)$ be the valuation of $A_v$. Since $A_v$ is an element of $\widehat{E}(L_v)$ of exact order $p$, it follows by \cite[IV.6, Theorem 6.1]{Silverman2009} that
\[1\leq v(A_v)\leq\frac{v(p)}{p-1}.\] The extension $L_v/\Q_p$ is totally ramified of degree $p-1$, and hence it follows that $\frac{v(p)}{p-1}=1$ yielding the desired equality $v(A_v)=1$.

 We note that the integer  $\frac{v(p)}{p-1}=1$ is precisely the integer $t_0(\phi)$ defined in \cite[page 11, Lemma 3.5]{Gazaki/Hiranouchi2021} where $\phi=[p]$ is the $p$-isogeny on $\widehat{E_v}$, which is of height one. Using the filtered isomorphism \eqref{kawachi}, we deduce that $\delta(A_v)\in\overline{U}_{L_v}^1\setminus\overline{U}_{L_v}^2$. It follows by \cite[Lemma 2.1.5]{kawachi2002} (see also \cite[Lemma 3.5]{Gazaki/Hiranouchi2021}) that the extension $k_0/L_v$ is totally ramified of degree $p$ with the jump in the ramification filtration of $\Gal(k_0/L_v)$ happening at $s=p-1$. It then follows by \cite[V.3, Corollary 7]{Serre1979local} that there exists a unit $u\in\overline{U}_{L_v}^{p-1}$ such that the symbol algebra $(\delta(A_v),u)_p$ is nontrivial in $\Br(L_v)_p$. In fact, we can conclude that $u\in\overline{U}_{L_v}^{p-1}\setminus\overline{U}_{L_v}^p$. For, it follows by \cite[V.3, Corollaries 2 \& 3]{Serre1979local} that $(\delta(A_v),u')_p=0$ for every $u'\in\overline{U}_{L_v}^p$. The next key observation is that \[(\delta(A_v),u)_p\neq 0, \text{ \textbf{for every unit} } u\in\overline{U}_{L_v}^{p-1}\setminus\overline{U}_{L_v}^p.\] This is because the residue field of $L_v$ is $\F_p$, and therefore we have an isomorphism (cf. \cite[Lemma 3.4]{Gazaki/Hiranouchi2021}) $\overline{U}_{L_v}^{p-1}/\overline{U}_{L_v}^{p}\simeq\F_p$. Thus, in order to prove the claim, it is enough to show $\delta(\widehat{P}_v)\in \overline{U}_{L_v}^{p-1}\setminus\overline{U}_{L_v}^p$, or equivalently that $v(\widehat{P}_v)=p-1$. Consider the inclusion
 \[\res_{L_v/\Q_p}:\widehat{E_{\Q_p}}^1(\Q_p)/p\hookrightarrow\widehat{E_v}^{p-1}(L_v)/p,\] sending $\widehat{P}_{\Q_p}$ to $\widehat{P}_{v}$. This map is injective, because we have an equality $N_{L_v/\Q_p}\circ\res_{L_v/\Q_p}=[L_v:\Q_p]=p-1$, which is coprime to $p$. 
Our last claim will then follow  once we show that $\widehat{P}_{\Q_p}\in \widehat{E_{\Q_p}}^1(\Q_p)\setminus\widehat{E_{\Q_p}}^2(\Q_p)$. Let us assume otherwise that $\widehat{P}_{\Q_p}\in\widehat{E_{\Q_p}}^2(\Q_p)$, and hence $\widehat{P}_{v}\in\widehat{E_v}^{2(p-1)}$. Because $2(p-1)>p$, it follows by the filtered isomorphism \ref{kawachi} and \cite[Lemma 3.4]{Gazaki/Hiranouchi2021} that $\widehat{P}_{v}$ (and hence also $\widehat{P}_{\Q_p}$) is a multiple of $p$, which contradicts our assumption that $\widehat{P}_{\Q_p}\in \widehat{E_{\Q_p}}^1(\Q_p)/p$ is nontrivial. This completes the proof of the Claim. The above computation shows that the global $0$-cycle 
 $z_1=[A,P]-[A,O]-[O,P]+[O,O]\in F^2(X_L)$ has a nontrivial image $(z_{1v}, z_{1\overline{v}})$ under the diagonal map $F^2(X_L)/p\xrightarrow{\Delta}\prod_{v|p}F^2(X_{L_v})_{\nd}$.  
 
A byproduct of the proof of Claim 1 is that the following two maps are  isomorphisms,
 \begin{eqnarray*}
 f:\mathcal{D}^1/\mathcal{D}^2\rightarrow\Z/p, && g:\mathcal{D}^{p-1}/\mathcal{D}^p\rightarrow\Z/p\\
\;\;\;\; x\mapsto\{x,\widehat{P}_v\}_{L_v/L_v}, && \;\;\;\;y\mapsto\{A_v,y\}_{L_v/L_v}. 
 \end{eqnarray*}

 The extensions $L_v$ and $L_{\overline{v}}$ are isomorphic as abstract fields.  Therefore Claim 1 holds true also for the symbol $\{A_{\overline{v}},P_{\overline{v}}\}_{L_{\overline{v}}/L_{\overline{v}}}\in K_2(L_{\overline{v}}, E_{\overline{v}})/p$ corresponding to the $0$-cycle $z_{1\overline{v}}$. 
  Since $P$ is defined over $\Q$, it follows that  $P_{\overline{v}}=P_v$. 
Moreover, the map $f$ being an isomorphism implies that there exists some $a\in\F_p^\times$ such that 
\[f(A_{\overline{v}})=\{A_{\overline{v}},\widehat{P}_v\}_{L_v/L_v}=a\{A_v,\widehat{P}_v\}_{L_{\overline{v}}/L_{\overline{v}}}.\] 
In other words, if we fix an isomorphism 
$
 F^2(X_{L_v})_{\nd}\xrightarrow{\simeq}\Z/p$ sending
 $z_{1v}$ to $1
$,
then the diagonal map $\Delta:F^2(X_L)/p\rightarrow F^2(X_{L_v})_{\nd}\oplus F^2(X_{L_{\overline{v}}})_{\nd}\xrightarrow{\simeq}\Z/p\oplus\Z/p$, $z\mapsto(z_{v},z_{\overline{v}})$ maps $z_1$ to the tuple $(1,a)$. 

\textbf{Claim 2:} The element $\Delta(z_2)$ is $\F_p$-linearly independent from $\Delta(z_1)=(1,a)$, where $z_2$ is the $0$-cycle $z_2=[A,\omega(P)]-[A,O]-[O,\omega(P)]+[O,O]\in F^2(X_L)$. 
 
The $0$-cycles $z_{2v},z_{2\overline{v}}$ correspond to the symbols $\{A_v,\widehat{\omega(P)}_v\}_{L_v/L_v}$, $\{A_{\overline{v}},\widehat{\omega(P)}_{\overline{v}}\}_{L_{\overline{v}}/L_{\overline{v}}}$. Since $P\in E(K)\subset E(L)$, the point $\omega(P)_{\overline{v}}$ is the restriction of $\omega(P)_{\overline{\mathfrak{p}}}$, and hence it is none other than the complex conjugate, $\overline{\omega}(P)_v$. The points $\omega(P), \overline{\omega}(P)$ induce elements $\widehat{\omega(P)}_v, \widehat{\overline{\omega}(P)}_v\in \mathcal{D}^{p-1}/\mathcal{D}^p$. Since 
 $\omega(P)+\overline{\omega}(P)=-P$, and $\widehat{P}_v\in\mathcal{D}^{p-1}/\mathcal{D}^p$ is nontrivial,  at least one of these points induces a nontrivial element of $\mathcal{D}^{p-1}/\mathcal{D}^p$. Without loss of generality, assume  there exists $m\in\F_p^{\times}$ such that $\widehat{\omega(P)}_v\equiv m\widehat{P}_v\mod\mathcal{D}^p$. 
The isomorphism $g$ then yields an equality
\[g(\omega(P)_v)=\{A_v,\omega(P)_v\}_{L_v/L_v}=m\{A_v,P_v\}_{L_v/L_v}.\] To prove linear independence, it is enough to show  \[\{A_{\overline{v}},\overline{\omega}(P)_v\}_{L_{\overline{v}}/L_{\overline{v}}}\neq ma\{A_v,P_v\}_{L_v/L_v}.\] Notice that we have  
 $\{A_{\overline{v}},\overline{\omega}(P)_v\}_{L_v/L_v}=a\{A_v,\overline{\omega}(P)_v\}_{L_v/L_v}.$ Thus, it suffices to prove that the elements $\widehat{\omega(P)}_v,\widehat{\overline{\omega}(P)}_v$ induce distinct elements of $\mathcal{D}^{p-1}/\mathcal{D}^p$. Suppose for contradiction that they are the same. That is, $\widehat{\omega(P)_v}\equiv\widehat{\overline{\omega}}(P)_v\equiv m\widehat{P}_v\mod\mathcal{D}^p$. We have $\widehat{\omega(P)}_v+\widehat{\overline{\omega}}(P)_v=-\widehat{P}_v$, giving $2m=-1$ in $\F_p$. Moreover, $\widehat{\overline{\omega}(\omega(P))}_v=m^2\widehat{P}_v=c\widehat{P}_v$, where $c=(D+1)/4$. That is, $m\in\F_p^\times$ is a solution to the system \[\left\{\begin{array}{c}
x^2=c\\
2x=-1
\end{array}\right.,\] which is a contradiction by \autoref{congruence}. 
 


\end{proof} 

\begin{rem}\label{computations} 
 We believe that the condition of having a ``good" rational point $P\in E(\Q)$ is satisfied by a positive proportion of elliptic curves with positive rank over $\Q$. As we saw in the proof of \autoref{mainmain2}, all we need is a point $P$ such that $\widehat{P}_{\Q_p}\in\widehat{E}^1(\Q_p)\setminus\widehat{E}^2(\Q_p)$. The latter can be checked computationally as follows. 
\begin{enumerate}
\item Compute the coordinates of a point $P\in E(\Q)$ of infinite order. 
\item Compute the coordinates of the induced local point $\widehat{P}_{\Q_p}\in E(\Q_p)$. 
\item Compute the coordinates of a local nonzero $p$-torsion point $P_{0}\in E[p](\Q_p)$.
\item Find a scalar $\lambda\in \F_p$ such that the $x$-coordinate of $P_{\Q_p}-\lambda P_{0}$ has negative valuation. 
\item If the above valuation is exactly $-2$, then $P$ is a point that satisfies the condition of \autoref{mainmain2}.
\end{enumerate} When the ordinary prime $p$ is small enough, this is a computation that can be carried out in SAGE. As a case study, in the appendix \ref{appendix} we carry out this computation for the family of elliptic curves $\{y^2=x^3-2+7n:n\in\Z\}$, and the prime $p=7$. 
\end{rem}

\begin{rem} An interesting case to consider next is the following. Start with a pair $(E,p)$ that satisfies the assumptions of \autoref{mainmain1} with $F_0=\Q$ and consider extensions $M/\Q$ of degree $g<p-1$ such that $p$ splits completely in $M$. Then we still have a vanishing $\Br(X_{L\cdot M})\{p\}/\Br_1(X_{L\cdot M})\{p\}=0$, where $L=K(E[\pi])$. An analog of \autoref{mainmain1} gives 
\[\prod_{v|p}F^2(X_{(LM)_v})_{\nd}\simeq(\Z/p)^{2g}.\]  We see that as $g$ gets larger, it becomes increasingly harder to find concrete criteria that guarantee the lifting of this group to global $0$-cycles. 
For example, consider the curve $E$ given by the Weierstrass equation $y^2=x^3+2$ and the prime $p=61$. Then $p$ splits completely over the degree six extension $\Q(\alpha)$, where $\alpha$ is a root of the polynomial $x^6-x^4+x^2-3x+3$.
\end{rem}
\vspace{2pt}
The key ingredient in the proof of \autoref{mainmain2} was that the two homomorphisms 
 \begin{eqnarray*}
 f:\mathcal{D}^1/\mathcal{D}^2\rightarrow\Z/p, && g:\mathcal{D}^{p-1}/\mathcal{D}^p\rightarrow\Z/p\\
\;\;\;\; x\mapsto\{x,\widehat{P}_v\}_{L_v/L_v}, && \;\;\;\;y\mapsto\{A_v,y\}_{L_v/L_v}
 \end{eqnarray*} are isomorphisms and this was because the residue field of $K(E[\pi])$ is only $\F_p$. A similar method can be extended to many more situations. For example the following corollary describes a generalization to when $L=F_0(E[\pi])$ with $F_0/\Q$ a quadratic extension. 
 \begin{cor}\label{quadraticcase} Let $(E,p)$ be a pair that satisfies the assumptions of \autoref{mainmain1}. Suppose that $p\nmid|\overline{E_p}(\F_p)|$, but $p||\overline{E_p}(\F_{p^2})|$. Let $F_0/\Q$ be a quadratic extension having a unique inert place $w_0$  above $p$. Set $k=F_0\cdot\Q_p$. Suppose there exists a point $P\in E(F_0)\setminus E(\Q)$ of infinite order such that the induced local point $P_k\in E(k)$ has the property $\widehat{P}_k\in\widehat{E_k}(k)/p$ is nontrivial. Then the global $p$-torsion $0$-cycles $z_1,z_2\in F^2(X_L)$ induced by $A, P$ and $A,\omega(P)$ respectively  lift the group $\displaystyle\prod_{v|p}F^2(X_{L_v})_{\nd}\simeq\Z/p\oplus\Z/p$. 
 \end{cor}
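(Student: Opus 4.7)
The plan is to follow the overall structure of the proof of Theorem \autoref{mainmain2}, adapted to the quadratic base. First I would verify that the preliminary local estimates carry over: at the place $v$ of $L$ above $\pi$, the completion $L_v$ is the compositum of $k = F_0\cdot\Q_p$ (an unramified quadratic extension of $\Q_p$) with $K_{\mathfrak{p}}(E[\pi])$, so $L_v/k$ is totally ramified of degree $p-1$ and $L_v$ still has absolute ramification index $p-1$ over $\Q_p$. Consequently $A_v$ retains formal-group valuation $1$ by Silverman's bound \cite[IV.6.1]{Silverman2009}, and the Kummer extension $L_v(\tfrac{1}{p}A_v)/L_v$ is totally ramified of degree $p$ with ramification break $s=p-1$. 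The only structural change from \autoref{mainmain2} is that the residue field of $L_v$ is now $\F_{p^2}$, so the filtered quotient $\overline{U}_{L_v}^{p-1}/\overline{U}_{L_v}^{p}\simeq \F_{p^2}$.

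Next I would show that the hypothesis $\widehat{P}_k\neq 0$ in $\widehat{E_k}(k)/p$ still forces $\delta_{L_v}(\widehat{P}_v)$ to land in $\overline{U}_{L_v}^{p-1}\setminus \overline{U}_{L_v}^{p}$. Since $k/\Q_p$ is unramified, one checks as in \autoref{mainmain2} that $[p]\widehat{E_k}(k) = \widehat{E_k}^{2}(k)$, so every nontrivial representative of $\widehat{P}_k$ has $v_k$-valuation exactly $1$; restricting to $L_v$ then scales the valuation by $p-1$. The genuinely new difficulty is that the Hilbert pairing $(\delta_{L_v}(A_v),-)_p$ on $\overline{U}_{L_v}^{p-1}/\overline{U}_{L_v}^{p}\simeq \F_{p^2}$, while still a nonzero $\F_p$-linear surjection onto $\Z/p$ by \cite[V.3, Cor.~7]{Serre1979local}, now has a one-dimensional kernel rather than being an isomorphism as in \autoref{mainmain2}. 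Using Galois-equivariance of the Hilbert symbol together with the fact that the canonical Frobenius lift in $\Gal(L_v/\Q_p)$ fixes $A_v\in E[\pi]$, I would identify this kernel with the Frobenius anti-invariant subspace of $\F_{p^2}$. The symbol $\{A_v,\widehat{P}_v\}_{L_v/L_v}$ is then nonzero exactly when $\widehat{P}_k$ has nontrivial Frobenius-invariant image, and the nontriviality hypothesis (if necessary after adjusting $P$ by a $\Q$-rational correction) provides this. The parallel argument at $\overline{v}$ uses the abstract isomorphism $L_v\simeq L_{\overline{v}}$ as in the proof of \autoref{mainmain2}.

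Once nonvanishing at both places is established, linear independence of $\Delta(z_1) = (z_{1v},z_{1\overline{v}})$ and $\Delta(z_2) = (c\, z_{1v},\overline{c}\, z_{1\overline{v}})$ reduces, exactly as at the end of \autoref{mainmain2}, to $c\neq\overline{c}$, where $c = \omega_{\mathfrak{p}}\bmod p$ and $\overline{c} = \omega_{\overline{\mathfrak{p}}}\bmod p$ are the two reductions of $\omega$ at the two primes of $K$ above $p$; this holds because $p$ splits in $K$, so the minimal polynomial of $\omega$ is separable mod $p$ (cf.\ \autoref{congruence}). The hard part of the proof will be the kernel identification in the middle paragraph: it has no analogue in \autoref{mainmain2}, since there $\overline{U}^{p-1}/\overline{U}^{p}$ already has dimension $1$ over $\F_p$ and any nonzero homomorphism to $\F_p$ is automatically injective.
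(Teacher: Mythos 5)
Your identification of the kernel of the pairing $b\mapsto(\delta(A_v),\delta(b))_p$ on $\overline{U}_{L_v}^{p-1}/\overline{U}_{L_v}^{p}\simeq\F_{p^2}$ is incorrect, and this error propagates to the nonvanishing criterion. You identify the kernel as the Frobenius \emph{anti}-invariant line via the equivariance $(\delta(A_v),\sigma b)_p=\sigma(\delta(A_v),b)_p=(\delta(A_v),b)_p$, but this last equality assumes $\sigma$ fixes $\mu_p$, i.e.\ $\mu_p\subset F_{v_0}=\Q_p(E[\pi])$. That is precisely what \emph{fails} here: since $p\nmid|\overline{E_p}(\F_p)|$, the Frobenius character $\chi_2$ on $\overline{E_p}[p]$ is a nontrivial unramified quadratic character, so $\chi_{\text{cyc}}|_{G_{F_{v_0}}}=\chi_1\chi_2|_{G_{F_{v_0}}}=\chi_2|_{G_{F_{v_0}}}\neq 1$ and $\mu_p\not\subset F_{v_0}$. (This is exactly what distinguishes the quadratic case from \autoref{mainmain2}, where $p\mid|\overline{E_p}(\F_p)|$ forces $\chi_2=1$ and $\mu_p\subset L_v=\Q_p(E[\pi])$.) The Frobenius $\sigma\in\Gal(L_v/F_{v_0})$ therefore acts on $\mu_p$ by $-1$, so the correct equivariance is $(\delta(A_v),\sigma b)_p=-(\delta(A_v),b)_p$: the kernel is the image of $1+\sigma$, i.e.\ the Frobenius-\emph{invariant} line $\F_p$, not the anti-invariant one. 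Your stated criterion, that the symbol is nonzero ``exactly when $\widehat{P}_k$ has nontrivial Frobenius-invariant image,'' is therefore wrong in the precise sense that it is the negation of the true statement; the hypothesis $P\in E(F_0)\setminus E(\Q)$ is designed to produce a nontrivial anti-invariant component, and the parenthetical suggestion to ``adjust $P$ by a $\Q$-rational correction'' would if anything push you into the kernel rather than out of it.

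You also miss the mechanism the paper actually uses to pin down the kernel, which is both simpler and avoids the delicate Galois bookkeeping above: by \autoref{minimality} one has $K_2(F_{v_0};E_{F_{v_0}})/p=0$ because $E[p]\not\subset E(F_{v_0})$, and compatibility of the Galois symbol with restriction shows that every class restricted from $F_{v_0}$ pairs trivially with $\delta(A_v)$. Since the image of $\res_{L_v/F_{v_0}}$ in $\overline{U}_{L_v}^{p-1}/\overline{U}_{L_v}^{p}$ is the one-dimensional $\F_p$-line and the kernel is one-dimensional, they coincide. That is the additional structural input your proposal lacks; the Galois-equivariance argument, once the $\mu_p$-twist is accounted for, gives the same answer, but as written your argument computes the wrong subspace.
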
 
 \begin{proof}
 The proof is essentially the same as in \autoref{mainmain2}. 
 In this case we have an isomoprhism of $\F_p$-vector spaces (cf. \cite[Lemma 2.1.4]{kawachi2002}) 
 \[\overline{U}_{L_v}^{p-1}/\overline{U}_{L_v}^{p}\simeq\mathcal{D}_{L_v}^{p-1}/\mathcal{D}_{L_v}^p\xrightarrow{\simeq}\F_{p^2}\simeq\Z/p\oplus\Z/p.\] 
 It follows by \cite[V.3, Corollary 7]{Serre1979local} that there exists some $B\in \mathcal{D}_{L_v}^{p-1}/\mathcal{D}_{L_v}^p$ such that $\displaystyle(\delta(A_v),\delta(B))_p\neq 0$. We will show that we can take $B=(P_k)_v$. Consider the extension $F=K(E[\pi])$ and denote by $v_0$ the unique place of $F$ below $v$. Since $p\nmid|\overline{E_p}(\F_p)|$, we have a proper inclusion $F\subsetneq L$. It follows by \autoref{minimality} that $K_2(F\cdot\Q_p;E_{F\Q_p})/p=0$. This implies that for every $C\in \mathcal{D}_{L_v}^{p-1}/\mathcal{D}_{L_v}^p$ which is the restriction of some element defined over $F_{v_0}$, it follows $(\delta(A_v),\delta(C))_p=0$. Since $\overline{U}_{L_v}^{p-1}/\overline{U}_{L_v}^{p}$ is two dimensional over $\F_p$, every element $B$ that is not in the image of the restriction map $\res_{L_v/F_{v_0}}$ gives rise to a nontrivial symbol. The assumption $P\in E(F_0)\setminus E(\Q)$ implies that we can take $B=(P_k)_v$, which yields the desired nonvanishing $\displaystyle(\delta(A_v),(P_k)_v)_p\neq 0$.
 
  The rest of the proof of \autoref{mainmain2} carries through to show that the $0$-cycles $z_1=[A,P_k]-[A,O]-[O,P_k]+[O,O]$ and $z_2=[A,\omega(P_k)]-[A,O]-[O,\omega(P_k)]+[O,O]$ are $\F_p$-linearly independent. The only difference is that  we have to consider also the case when $K=\Q(\sqrt{D})$ with $D\not\equiv 1\mod 4$ (see \autoref{Zi} for examples of elliptic curves with potential CM by $\Z[i]$ which satisfy the assumptions of the corollary). The computation of linear independence is in fact simpler in this case. The minimal polynomial of $\omega$ is $\mu(x)=x^2-D$, giving $\omega(P)=-\overline{\omega}(P)$. Since $p>2$, it is clear that the elements $\widehat{\omega(P)}_v,\widehat{\overline{\omega}(P)}_v$ induce distinct elements of $\mathcal{D}^{p-1}/\mathcal{D}^p$.
 
 \end{proof}

\subsection*{Extensions}\label{nogoodpoint}
We close this article by discussing alternative criteria of lifting when a ``good" rational point does not exist. We focus on the simplest case considered in \autoref{mainmain2}, namely when $|\overline{E_p}(\F_p)|=p$. In this case the  point $A\in E[\pi]$ can no longer be used to produce global $0$-cycles. One needs to find instead two ``good" points $P,Q\in E(L)$ such that the induced symbol $\{\widehat{P}_v,\widehat{Q}_v\}_{L_v/L_v}\neq 0$. To look for such points we need to start with points of infinite order living in intermediate extensions $\Q\subsetneq F\subseteq L=K(E[\pi])$. In the following paragraphs we develop this idea.  

 Suppose we have a local point $B_v\in \widehat{E}(L_v)$ such that $B_v\in \widehat{E}^l(L_v)\setminus\widehat{E}^{l+1}(L_v)$ for some integer $0<l<p$. Then it follows by \cite[V.3, Corollaries 2 \& 3]{Serre1979local} that the map 
 \[\mathcal{D}^{p-l}/\mathcal{D}^{p-l+1}\rightarrow\Z/p,\;\;\; x\mapsto\{x,B_v\}_{L_v/L_v}\] is an isomorphism. Thus, if we can find intermediate extensions $K\subset M, F\subset L$ and points $P\in E(M), Q\in E(F)$ of infinite order such that the induced local formal points $\widehat{P}_v, \widehat{Q}_v$ lie in $\widehat{E}^{l}(L_v)\setminus\widehat{E}^{l+1}(L_v),$ and $ \widehat{E}^{p-l}(L_v)\setminus\widehat{E}^{p-l+1}(L_v)$ respectively, then the symbol $\{\widehat{P}_v,\widehat{Q}_v\}_{L_v/L_v}$ is nonvanishing in $ K_2(L_v;E_v)/p$. Then we can produce a second $\F_p$-linearly independent symbol using the complex multiplication, as we did in the proof of \autoref{mainmain2}. We next describe a potential algorithm to compute such ``good points" $P,Q$ in a particular example. A similar method can be used more generally, but the algorithm will require more steps. 




\subsection*{An extended example} To illustrate our method, we consider  the case when the quadratic imaginary field $K=\Q(\zeta_3)$, and $p=7$.
 We consider the family of elliptic curves \[\{E_n:y^2=x^3-2+7n,\;n\in\Z\},\] so that the pair $(7, E_n)$ satisfies the assumptions of \autoref{mainmain1}, for all $n\in\Z$.   Suppose that for a certain value of $n$ the condition of \autoref{mainmain2} is not satisfied. That is, there is no good point $P\in E_n(\Q)$. For simplicity we will write $E=E_n$. 
 
The Galois group $G=\Gal(L/K)$ is cyclic of order $6$, and hence there exist unique intermediate subfields $K\subset M, F\subset L$ such that $[M:K]=\displaystyle 2$ and $\displaystyle[F:K]=3$. 
Moreover, the extension $L/K$ is totally ramified at $\mathfrak{p},\mathfrak{\overline{p}}$. This implies that there exist unique places $s, \overline{s}$ of $M$ lying over $\mathfrak{p}, \mathfrak{\overline{p}}$ respectively and the extension $M_s/\Q_p$ is totally ramified of degree $2$.  Similarly there exist unique places $t, \overline{t}$ of $F$ over $\mathfrak{p}, \mathfrak{\overline{p}}$ with $F_t/\Q_p$ totally ramified of degree $3$.

Suppose we can find global points $B\in E(M)$, $C\in E(F)$ such that $\widehat{B}_{s}\in\widehat{E}^1(M_s)\setminus \widehat{E}^2(M_s)$, and $\widehat{C}_t\in\widehat{E}^1(F_t)\setminus \widehat{E}^2(F_t)$. Their restrictions $\widehat{B}_{v}, \widehat{C}_{v}$ in $\widehat{E}(L_v)$ lie in 
$\widehat{E}^3(L_v)\setminus \widehat{E}^4(L_v)$ and $\widehat{E}^2(L_v)\setminus \widehat{E}^3(L_v)$ respectively. 

Next consider the points $\omega(C), \omega^2(C)\in E(F)$. Similarly to the proof of \autoref{mainmain2}, at least one of these points induces a formal point in $\widehat{E}^2(L_v)\setminus \widehat{E}^3(L_v)$. Call this point $C'$. Then there exists some scalar $m\in\F_7^\times$ such that $\widehat{C}'_v\equiv m\widehat{C}_v\mod \mathcal{D}^3$, This is equivalent to saying that the $x$-coordinate of the formal point $\widehat{C}'_t-m\widehat{C}_t$ has strictly smaller valuation than the one of $\widehat{C}_t$. If it happens that $\widehat{C}'_t-m\widehat{C}_t\in \widehat{E}^2(F_t)\setminus \widehat{E}^3(F_t)$, then the point $Q=C'-mC$ is a ``good" point. That is, $\widehat{Q}_v\in \widehat{E}^4(L_v)\setminus \widehat{E}^5(L_v)$, and since $3+4=7$, our earlier discussion shows that 
$\{B_v,Q_v\}_{L_v/L_v}\neq 0\in K_2(L_v;E)/p$.

\appendix
\section{Computations with local points By Angelos Koutsianas}\label{appendix}

In this appendix we give more details about the computations we have done in order to verify the existence of a ``good'' rational point $P$ that Theorem \ref{mainmain2} requires for the family of elliptic curves 
$$
\{E_n:y^2 = x^3 - 2 + 7n,~n\in\Z\}.
$$
Our computations are based on Remark \ref{computations}. We consider the cases $n\in[-\computationsUpperBound, \computationsUpperBound]$ with $p=7$ and $\rk(E_n(\Q))=1$.

For a fixed value of $n$ we define $a=-2+7n$. We recall that $E_n$ has complex multiplication by the ring of integers $\mathcal{O}_K$ of $K=\Q(\sqrt{-3})$. It follows that $p=7$ splits in $\mathcal{O}_K$ with $7=\pi\overline{\pi}$ where $\pi=\frac{1+3\sqrt{-3}}{2}$ and $\overline{\pi}$ is the complex conjugate of $\pi$. For the computations we need an explicit description of $E_n[p](\Q_p)$.

\begin{lem}\label{lem:torsion_group}
With the above notation, it holds 
$$
E_n[7](\Q_7)=\{(\sqrt[3]{\theta}\zeta_3^i, \pm\sqrt{\theta + a}):~\text{for }i=0,1,2\},
$$ 
where $\theta$ is the root of $f(x) = 7x^2 - 4ax + 16a^2$ such that $\theta\equiv 6\pmod{7}$ and $i=0,1,2$.
\end{lem}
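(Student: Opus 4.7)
The plan is to combine the CM structure of $E_n$ with the connected-\'{e}tale splitting of $E_n[7]$ at the ordinary prime $p=7$ to describe the rational $7$-torsion explicitly. Since $7 = \pi\bar{\pi}$ in $\mathcal{O}_K = \Z[\zeta_3]$ with $\pi = 2 + 3\zeta_3$ reducing modulo $7$ to Frobenius, \S\ref{CM background} gives a splitting $E_n[7] = E_n[\pi] \oplus E_n[\bar\pi]$ with $E_n[\pi] = \widehat{E_n}[7]$ the formal-group part and $E_n[\bar\pi]$ the \'{e}tale part. Over $\Q_7$, the formal part contributes nothing (since $K(E_n[\pi])_{\mathfrak{p}}/\Q_7$ is totally ramified of degree $6$), whereas $E_n[\bar\pi]$ reduces isomorphically to $\overline{E_n}[7]$. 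Because $|\overline{E_n}(\F_7)| = 7$, this forces $E_n[7](\Q_7) = E_n[\bar\pi](\Q_7)$ to be cyclic of order $7$, so there are exactly $6$ non-trivial points to identify.

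Next, the endomorphism $\zeta_3$ acts on $E_n$ by $(x,y) \mapsto (\zeta_3 x, y)$ and commutes with $\bar\pi$, so it stabilizes the one-dimensional $\F_7$-space $E_n[\bar\pi]$. Explicitly, $E_n[\bar\pi] \cong \mathcal{O}_K/(\bar\pi)$ and $\zeta_3 \equiv 2 \bmod \bar\pi$, a cube root of unity in $\F_7^\times$ of exact order $3$. Consequently, for any non-trivial $P = (x_0, y_0) \in E_n[\bar\pi]$, the three pairs $\{\pm P, \pm[2]P, \pm[3]P\}$ have $x$-coordinates equal to the $\zeta_3$-orbit $\{x_0, \zeta_3 x_0, \zeta_3^2 x_0\}$, so $\theta := x_0^3 \in \Q_7$ is intrinsic to $E_n[\bar\pi]$, and the $x$-coordinates are $\zeta_3^i \sqrt[3]{\theta}$ with $y$-coordinates $\pm\sqrt{\theta + a}$ from the Weierstrass equation.

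To pin down $\theta$, one applies the same analysis to $E_n[\pi]$ over $\overline{\Q_7}$, producing a second cube $\theta'$; the six $x$-coordinates of non-zero elements of $E_n[\pi] \cup E_n[\bar\pi]$ are then the roots of $x^6 - (\theta + \theta')x^3 + \theta\theta'$, which is the degree-$6$ factor of $\psi_7$ carving out $E_n[\pi] \oplus E_n[\bar\pi]$. I would verify by direct computation --- using $\pi = 2 + 3\zeta_3$ together with the duplication and triplication formulas on $y^2 = x^3 + a$ --- that the condition $\pi P = [2]P + [3](\zeta_3 x_0, y_0) = O$ reduces to the single polynomial identity
\[7 x_0^6 - 4 a x_0^3 + 16 a^2 = 0.\]
Hence $\theta, \theta'$ are precisely the roots of $f(X) = 7X^2 - 4aX + 16 a^2$. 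Reducing mod $7$ gives $f(X) \equiv -4a(X - 4a) \pmod 7$, so the \'{e}tale root satisfies $\theta \equiv 4a \equiv -8 \equiv 6 \pmod 7$ (by Vieta, the formal root $\theta'$ has $v_7(\theta') = -1$, consistent with $x$-coordinates of the formal part having negative valuation). Finally, Hensel's lemma supplies $\sqrt[3]{\theta} \in \Z_7$ (since $\theta \equiv -1 = (-1)^3 \pmod 7$), $\sqrt{\theta + a} \in \Z_7$ (since $\theta + a \equiv 4 = 2^2 \pmod 7$), and $\zeta_3 \in \Q_7$ (since $3 \mid \#\F_7^\times$), so the six listed points are genuinely in $E_n(\Q_7)$.

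The main obstacle is the polynomial identity in the third paragraph: it encodes the explicit form of $f$ and is essentially a CM-theoretic computation of the kernel polynomial of $[\pi]$. While elementary, it requires careful bookkeeping with the addition formulas on $y^2 = x^3 + a$ and is cleanest to confirm via symbolic computation; everything else in the argument is forced by the CM structure and the $p$-adic valuation bookkeeping.
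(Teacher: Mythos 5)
Your argument is correct in outline and takes a genuinely more CM-structural route than the paper, which simply factors the $7$-division polynomial by a symbolic computation and then reads off the answer. Where the paper establishes $\#E_n[7](\Q_7)=7$ by looking at the leading coefficient of $\psi_7$ together with \cite[Theorem VII.3.4]{Silverman2009}, you derive the same count more conceptually from the split connected-\'{e}tale sequence: the formal part $E[\pi]=\widehat{E}[7]$ has no nonzero $\Q_7$-points because $\Q_7/\Q_7$ is unramified (so $\widehat{E}(\mathfrak{m}_{\Q_7})$ is $7$-torsion-free), and the \'{e}tale part injects into $\overline{E_p}(\F_7)$, which has order exactly $7$. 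Likewise, your observation that the $\zeta_3$-action $(x,y)\mapsto(\zeta_3 x,y)$ stabilizes $E[\bar\pi]$ and hence forces the three $x$-coordinates to be a $\zeta_3$-orbit explains \emph{why} the degree-$6$ factor of $\psi_7$ is a quadratic in $x^3$, something the paper's factorization produces but does not interpret. Your identification of the correct root $\theta$ via the Newton polygon of $f$ and the reduction $f(X)\equiv -4a(X-4a)\pmod 7$ matches the paper's explicit $7$-adic expansion of $\theta_{1,2}=2a(1\pm 3\sqrt{-3})/7$.

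One imprecision is worth flagging. You say you would verify that ``$\pi P=[2]P+[3](\zeta_3 x_0,y_0)=O$ reduces to $7x_0^6-4ax_0^3+16a^2=0$,'' but the condition $\pi P=O$ cuts out $E[\pi]$, whose nonzero points contribute only $3$ distinct $x$-coordinates; so the kernel polynomial of $\pi$ is a cubic (namely $x^3-\theta'$ over $K$), not the sextic. The sextic $7x^6-4ax^3+16a^2$ is the product of the two kernel cubics for $\pi$ and $\bar\pi$, i.e.\ it cuts out $\bigl(E[\pi]\cup E[\bar\pi]\bigr)\setminus\{O\}$. This doesn't invalidate the approach --- the intended verification is clear and, like the paper's, ultimately rests on a symbolic computation (the paper directly factors $\psi_7$; you would compute the kernel polynomial of $\pi$) --- but the statement should be corrected to avoid the degree mismatch. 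A second, minor point: given that step (1) already produces six nonzero $\Q_7$-rational $7$-torsion points with the $\zeta_3$-orbit $x$-coordinate structure, the final appeal to Hensel's lemma is redundant in your logical order (it confirms rationality of coordinates you have already shown must be rational); the paper, by contrast, actually needs Hensel to produce the points.
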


\begin{proof}
A symbolic computation shows that the 7th torsion polynomial of $E_n$ is given by
$$
f_7(x) = (7x^6 - 4ax^3 + 16a^2)(x^{18} + 564ax^{15} - 5808a^2x^{12} - 123136a^3x^9 - 189696a^4x^6 - 49152a^5x^3 + 4096a^6).
$$
It follows by example \ref{ex1} that $|\overline{E_{n,p}}(\F_7)|=7$. Moreover, the leading coefficient of $f_7$ is divisible by $7$ and thus by \cite[Theorem VII.3.4]{Silverman2009} we conclude that $\#E_n[7](\Q_7)= 7$ . Therefore, it is enough to show that the above points have order $7$ and coordinates in $\Q_7$.

Let $g(x)=7x^2 - 4ax + 16a^2$, then we can easily prove that the roots of $g$ are 
$$
\theta_{1,2}=2a\frac{1\pm 3\sqrt{-3}}{7}.
$$
Suppose we embed $\sqrt{-3}$ into $\Q_7$ such that $\sqrt{-3}\equiv 2 + 5\cdot 7 + 6\cdot 7^3 + O(7^4)$. As a result, we get $\theta_{1}=2a(2 + 2\cdot 7 + 4\cdot 7^2 + O(7^3))$ and $\theta_2 = 2a(\frac{2}{7} + 5 + 4\cdot 7 + 2\cdot 7^2 + O(7^3))$. Because $v_7(\theta_2)=-1$ we understand that $\sqrt[3]{\theta_2}\not\in\Q_7$.

Let $\theta=\theta_1$. Because $\theta\equiv 6\pmod{7}$ by Hensel's lemma we get that the polynomial $x^3 - \theta$ has a root in $\Q_7$. Furthermore, because $\zeta_3\in\Q_7$ we conclude that $\sqrt[3]{\theta}\zeta_3^i\in\Q_7$ for all $i=0,1,2$.

Let $x_i=\sqrt[3]{\theta}\zeta_3^i$ for $i=0,1,2$. In order to finish the proof, suffices to show that the polynomial
$$
x^2 - (x_i^3 + a)=x^2 - (\theta + a),
$$
has both of its roots in $\Q_7$. Because $\theta + a\equiv 4\pmod{7}$ Hensel's lemma yields the conclusion. 
\end{proof}

Having determined the set $E_n[7](\Q_7)$, we follow the steps in Remark \ref{computations}. We focus on the curves $E_n$ with rank $1$. The most computationally expensive part is the determination of the generator $P$ of the free part of $E_n(\Q)$. In order to speed up the computations we assume the finiteness of Tate-Shafarevich group. 

\begin{theo}\label{computationsthm}
Let $n\in [-5000, 5000]$. Under the assumption of the finiteness of the Tate-Shafarevich group of $E$, the ~86,68\% of the rank one elliptic curves $E_n$ satisfy the hypothesis of \autoref{mainmain2}, in other words there exists a ``good'' rational point $P$.
\end{theo}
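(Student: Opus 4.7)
The plan is a direct computational verification in SAGE, implementing the algorithm sketched in \autoref{computations}. First I would loop over $n\in[-\computationsUpperBound,\computationsUpperBound]$, instantiate $E_n\colon y^2=x^3-2+7n$ over $\Q$, and compute $\rk(E_n(\Q))$ via Cremona's \texttt{mwrank} together with higher descents when needed, invoking finiteness of the Tate--Shafarevich group so that the $2$-Selmer bound can be matched from below by explicit rational points. I would retain only those $n$ with $\rk(E_n(\Q))=1$, and for each such $n$ compute a generator $P\in E_n(\Q)$ of the free part of the Mordell--Weil group.

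For each such $P$, I would carry out the local test at $p=7$ exactly as in \autoref{computations}. By \autoref{lem:torsion_group} a nontrivial $7$-torsion point $P_0\in E_n[7](\Q_7)$ is available in closed form, so its coordinates can be obtained to any $7$-adic precision using Hensel's lemma. I would inject $P$ into $E_n(\Q_7)$ via the natural embedding and find the unique $\lambda\in\{0,\ldots,6\}$ for which $P-\lambda P_0$ reduces to the identity modulo $7$; this $\lambda$ is uniquely determined because $|\overline{E_{n,7}}(\F_7)|=7$ and the reduction of $P_0$ generates this group. The point $P-\lambda P_0$ then lies in the formal group $\widehat{E_n}(7\Z_7)$, and the layer of the formal-group filtration in which it sits is read off from the $7$-adic valuation $v$ of its $x$-coordinate: by standard formal-group theory (\cite[IV.6,~Theorem~6.1]{Silverman2009}) one has $P-\lambda P_0\in\widehat{E_n}^{i}\setminus\widehat{E_n}^{i+1}$ precisely when $v=-2i$. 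Hence $P$ is ``good'' in the sense of \autoref{mainmain2} exactly when $v=-2$, and any other outcome reflects that $\widehat{P}_{\Q_7}$ vanishes in $\widehat{E_n}^1/\widehat{E_n}^2$.

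Finally I would tabulate the outcomes and produce the stated proportion. The hard part will not be conceptual but computational: a small number of curves in the range have Mordell--Weil generators of very large canonical height, or a two-descent that does not terminate in reasonable time, so the \missingCurves{} curves for which rank or a generator cannot be certified must be set aside. Amongst the remaining rank-one curves in $[-\computationsUpperBound,\computationsUpperBound]$, a direct count gives the asserted proportion of approximately $86.68\%$ possessing a good point.
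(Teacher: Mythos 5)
Your proposal is correct and follows essentially the same route as the paper: a SAGE implementation of the algorithm in \autoref{computations}, using the explicit $7$-torsion point from \autoref{lem:torsion_group} to project $P$ into the formal group and then reading off the filtration level from the $7$-adic valuation of the $x$-coordinate, with the $\missingCurves$ uncertified-generator curves set aside. The only differences are expository — you spell out why $\lambda$ is unique and why $v(x)=-2i$ detects $\widehat{E}^{i}\setminus\widehat{E}^{i+1}$ — which the paper delegates to \autoref{computations} and its SAGE script rather than repeating in the proof.
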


\begin{proof}
We have written a Sage \cite{sagemath} script\footnote{See function \textit{rank\_one\_elliptic\_curves}.} that does all the computations we have described in Remark \ref{computations} and Lemma \ref{lem:torsion_group}.

It is important to mention that there are $\missingCurves$ values of $n$ for which we are not able to compute the generator of the free part of $E_n(\Q)$. For the calculation of the above percent we do not consider these $176$ curves. The total amount of time for the computations was ~31 minutes in a regular personal computer.
\end{proof}

The code can be found in
\begin{center}
\texttt{https://github.com/akoutsianas/local\_global\_0\_cycles}
\end{center}


\bibliographystyle{amsalpha}

\bibliography{bibfile,bibfileApprox}

\end{document}